\newtheorem{thm}{Theorem}[section]
\newtheorem{prop}[thm]{Proposition}
\newtheorem{lemma}[thm]{Lemma}
\theoremstyle{definition}
\newtheorem{defn}[thm]{Definition}
\theoremstyle{remark}
\numberwithin{equation}{section}
\def\C{\mathbb{C}}
\def\Z{\mathbb{Z}}
\def\R{\mathbb{R}}
\def\Z{\mathbb{Z}}
\def\E{\mathcal{E}}
\def\BB{\mathbb{B}}
\def\SS{\mathbb{S}}
\def\d{{\text{d}}}
\def\g{\mathfrak{g}}
\def\D{\mathcal{D}}
\def\CC{\mathcal{C}}
\newcommand{\de}{\partial}
\newcommand{\db}{\overline{\partial}}
\newcommand{\ddb}{{\partial }\overline{\partial}}
\newcommand{\pkk}{$p-$K\"ahler }
\newcommand{\KK}{K\"ahler }
\def\B{\mathcal{B}}
\def\G{\mathcal{G}}
\def\W{\mathcal{W}}
\def\A{\mathcal{A}}
\def\G{\mathcal{G}}
\def\K{\mathcal{K}}
\def\S{\mathcal{S}}
\def\PP{\mathcal{P}}
\def\M{\mathcal{M}} 
\begin{document}

\title[weak forms]{Weak forms of $\ddb-$Lemma on compact complex manifolds}

\author{Lucia Alessandrini}
\address{ Dipartimento di Scienze Matematiche Fisiche e Informatiche\newline
Universit\`a di Parma\newline
Parco Area delle Scienze 53/A\newline
I-43124 Parma
 Italy} \email{lucia.alessandrini@unipr.it}

\subjclass[2010]{Primary 53C55; Secondary 53C56, 32J27}

\keywords{$\ddb-$Lemma, K\"ahler manifold,
balanced manifold, $p-$K\"ahler manifold.}

\begin{abstract}
This paper is devoted to give a complete unified study of several weak forms of $\ddb-$Lemma on compact complex manifolds. 
\end{abstract}

\maketitle

\section{Introduction}

One of the relevant properties of compact \KK  manifolds is the so called {\it $\ddb-$Lemma}, which assures that, for every couple of indices $(p,q)$ and for every $d-$closed $(p,q)-$form $\alpha$, the various exactness conditions are equivalent (i.e. $\alpha$ is $d-$exact if and only if it is $\de-$exact, or 
$\db-$exact, or $\ddb-$exact, or also the $(p,q)-$component of a boundary: see Section 4). 
In this way, all important cohomologies are linked: De Rham, Dolbeault, Aeppli, Bott-Chern.

Compact manifolds on which the $\ddb-$Lemma holds have been also called {\it cohomologically K\"ahler manifolds}; notice that there is a  wide class of 
cohomologically K\"ahler compact manifolds, namely class $\CC$ of Fujiki.

An important consequence of the $\ddb-$Lemma is the possibility to find a system of common representatives in the various cohomology classes, that allow to compute cohomology groups (due to compactness, cohomology is finite dimensional). To be precise, while Fr\"ohlicher relations always hold on the manifold $M$, i.e. for every $k$,
$$ dim \ H_{DR}^{k}(M, \C) := b_k(M)  \leq \sum _{p+q=k} h^{p,q}_{\db}(M)  := \sum _{p+q=k} dim \ H^{p,q}_{\db}(M, \C),$$
not always $M$ has a Hodge decomposition (i.e. in the previous formula, equalities hold) or a strong Hodge decomposition, i.e. for every $k,p,q$,
$$b_k(M)  = \sum _{p+q=k} h^{p,q}_{\db}(M), \ \ \  h^{p,q}_{\db}(M) = h^{q,p}_{\db}(M).$$
But when $M$ is a $\ddb-$manifold, i.e. a compact complex manifold on which the $\ddb-$Lemma holds, then $M$ has a strong Hodge decomposition; this implies that for every $p,q$, the Fr\"ohlicher Spectral Sequence degenerates at $E_1$, and there are natural isomorphisms (see Section 2 for the definitions) 
$$H^{p,q}_{\d} \simeq H^{p,q}_{\db} \simeq H^{p,q}_{\de} \simeq H^{p,q}_{\ddb} \simeq H^{p,q}_{\de +\db}.$$

From this kind of results one gets nice geometric properties of the manifold, in particular on deformations of the complex structure (see Section 7).
\medskip

In the last years, some authors studied many weak forms of  the $\ddb-$Lemma to achieve similar geometric properties on compact non \KK manifolds, in particular about deformations of the complex structure of the manifold.

This paper is devoted to  a  unified study of several weak forms of $\ddb-$Lemma on compact complex manifolds, in order to clarify them through the general setting, to emphasize  links and  possible generalizations; we also pay particular attention to the link with generalized \pkk structures, which can also be considered as weak forms of the \KK condition. As a matter of fact, our investigation was motivated by  the following question: for a fixed index p, which conditions, weaker than the $\ddb-$Lemma, assures the equivalence of the various generalized \pkk structures? The answer is given in Theorem 6.7.

Our investigation starts from two (old) papers which are basic in the subject, i.e.  \cite{DGMS} (1976) and \cite{Va} (1986).
\medskip

The plane of the paper is as follows:

1. Introduction

2. The setting in the paper of Varouchas \cite{Va}

3. The setting in the paper of Deligne, Griffiths, Morgan, Sullivan \cite{DGMS}

4. Weak $\ddb-$conditions

5. Generalized \pkk manifolds

6. Cones

7. Some results.

\bigskip

 \section{The setting in the paper of Varouchas \cite{Va}}
 
In \cite{Va}, the author calls \lq\lq regular manifolds\rq\rq those compact complex manifolds where $Ker \ddb = Ker \de + Im \db$ on differential forms (see  forthcoming Definition 2.6). He proves that their cohomologies (De Rham, Dolbeault, Aeppli, Bott-Chern) admit common representatives, using the $\ddb-$elliptic complex (see p. 236 ibidem).
From this paper we toke the  exact sequences  (3.1) and (3.2) which have been used for the first time in \cite{A1}; let us recall them here, and let us collect the first results (which are more or less well known in the literature, see \cite{A}, \cite{AT}, \cite{Po1}, \cite{Po2}, \cite{PU1}, \cite{RWZ1}, \cite{RWZ2}, \cite{RZ}, and so on).
 \medskip
 
  Let $M$ be a compact complex manifold of complex dimension $n$; for every $p$ and $q$, $0 \leq p,q \leq n$, let ${\E}^{p,q}(M)$ denote the Fr\'echet space of complex-valued $(p,q)-$forms.
 Let us consider the following operators:
 $$\de^{p,q} := \de : {\E}^{p,q}(M) \to {\E}^{p+1,q}(M), \ \ \db^{p,q} := \db : {\E}^{p,q}(M) \to {\E}^{p,q+1}(M)$$
 $$\ddb^{p,q} := \ddb : {\E}^{p,q}(M) \to {\E}^{p+1,q+1}(M).$$
 The corresponding cohomology groups are (we use here an old - but much more intuitive - notation, see f.i. \cite{DP}):
 $$H^{p,q}_{\de} = H^{p,q}_{\de}(M, \C) := \frac{Ker \de^{p,q}}{Im \de^{p-1,q}}, \ \ H^{p,q}_{\db} = H^{p,q}_{\db}(M, \C) := \frac{Ker \db^{p,q}}{Im \db^{p,q-1}}$$
 $$H^{p,q}_{\ddb} = H^{p,q}_{\ddb}(M, \C) := \frac{Ker \de^{p,q} \cap Ker \db^{p,q}}{Im \ddb^{p-1,q-1}} = H^{p,q}_{BC}(M, \C), $$
 $$H^{p,q}_{\de + \db} = H^{p,q}_{\de + \db}(M, \C) := \frac{Ker \ddb^{p,q}}{Im \de^{p-1,q} + Im \db^{p,q-1}} = H^{p,q}_{A}(M, \C), $$
 
 When $p=q$, we also consider  real cohomology (here the natural operator is $i \ddb$): 
 $$H_{\ddb}^{k,k}(M, \R) = H_{BC}^{k,k}(M, \R) :=\frac{\{ \varphi \in {\E}^{k,k}(M)_\R;
d\varphi =0\}}{\{\varphi = i\partial\overline{\partial}\alpha ;\alpha \in {\E}^{k-1,k-1}(M)_\R\}}$$
$$H_{\de + \db}^{k,k}(M, \R) = H_{A}^{k,k}(M, \R) :=\frac{\{ \varphi \in {\E}^{k,k}(M)_\R;
i\ddb\varphi =0\}}{\{\varphi = \de \overline\alpha + \db \alpha ; \alpha \in {\E}^{k,k-1}(M)\}}$$
$$H_{d} ^{k,k}(M, \R) :=\frac{\{ \varphi \in {\E}^{k,k}(M)_\R;
d\varphi =0\}}{\{  \varphi \in {\E}^{k,k}(M)_\R; \varphi = d\eta; \eta \in {\E}^{2k-1}(M)_\R\}}$$
$$H_{DR}^{j}(M, \R) :=\frac{\{ \zeta \in {\E}^{j}(M)_\R;
d\zeta =0\}}{\{  \zeta \in {\E}^{j}(M)_\R; \zeta = d\eta; \eta \in {\E}^{j-1}(M)_\R\}} .$$
\medskip

For every $p,q,$ with $ 0 \leq p,q \leq n$, we shall consider also the following quotient spaces:

$$A^{p,q}= A^{p,q}(M, \C) := \frac{Im \de^{p-1,q} \cap Im \db^{p,q-1}}{Im \ddb^{p-1,q-1}}, \ \ F^{p,q} = F^{p,q}(M, \C) := \frac{Ker \ddb^{p,q}}{Ker \de^{p,q} + Ker \db^{p,q}}, $$
$$B^{p,q}= B^{p,q}(M, \C) := \frac{Im \de^{p-1,q} \cap Ker \db^{p,q}}{Im \ddb^{p-1,q-1}}, \ \ D^{p,q} = D^{p,q}(M, \C) := \frac{Ker \de^{p,q} \cap Im \db^{p,q-1}}{Im \ddb^{p-1,q-1}}, $$
$$C^{p,q}= C^{p,q}(M, \C) := \frac{Ker \ddb^{p,q}}{Im \de^{p-1,q} + Ker \db^{p,q}}, \ \ E^{p,q} = E^{p,q}(M, \C) := \frac{Ker \ddb^{p,q}}{Ker \de^{p,q} + Im \db^{p,q-1}}.$$
\medskip

Due to the compactness of $M$, all these spaces are finite dimensional, and their dimension is denoted by the corresponding small letter, f.i. 
$h^{p,q}_{\de + \db} := dim_{\C} H^{p,q}_{\de + \db}$.
\medskip

These spaces are linked by the following exact sequences, where all maps are induced by the identity:
  \begin{equation}\label{1}
0 \to A^{p,q}  \to B^{p,q}  \to H^{p,q}_{\db}  \stackrel{g_{\db}}{\to}  H^{p,q}_{\de + \db}  \to C^{p,q} \to 0
  \end{equation}
  \begin{equation}\label{2}
0 \to D^{p,q}  \to H^{p,q}_{\ddb} \stackrel{f_{\db}}{\to}  H^{p,q}_{\db}  \to E^{p,q}  \to F^{p,q} \to 0
  \end{equation}
  
By means of the isomorphism induced by the conjugation, it is easy to see that:
$$H^{p,q}_{\de + \db} \simeq H^{q,p}_{\de + \db}, \ \ H^{p,q}_{\ddb} \simeq H^{q,p}_{\ddb}, \ \ A^{p,q} \simeq A^{q,p}, \ \ F^{p,q} \simeq F^{q,p}$$
and
$$H^{p,q}_{\db} \simeq H^{q,p}_{\de}, \ \ B^{p,q} \simeq D^{q,p}, \ \ C^{p,q} \simeq E^{q,p},$$
so that we have also the conjugate exact sequences:
  \begin{equation}\label{3}
0 \to A^{p,q}  \to D^{p,q}  \to H^{p,q}_{\de}  \stackrel{g_{\de}}{\to}  H^{p,q}_{\de + \db}  \to E^{p,q} \to 0
  \end{equation}
  \begin{equation}\label{4}
0 \to B^{p,q}  \to H^{p,q}_{\ddb} \stackrel{f_{\de}}{\to}  H^{p,q}_{\de}  \to C^{p,q}  \to F^{p,q} \to 0.
  \end{equation}
\medskip
  
  {\bf Remark 2.1.1} We may also define:
$$\tilde B^{p,q} := B^{p,q} / A^{p,q} \simeq \frac{Im \de^{p-1,q} \cap Ker \db^{p,q}}{Im \de^{p-1,q} \cap Im \db^{p,q-1}}, \ \  \tilde E^{p,q} := E^{p,q} / F^{p,q} \simeq \frac{Ker \de^{p,q} + Ker \db^{p,q}}{Ker \de^{p,q} + Im \db^{p,q-1}},$$
so that sequences (2.1) and (2.2) becomes:
  \begin{equation}\label{5}
0 \to \tilde B^{p,q}  \to H^{p,q}_{\db}  \stackrel{g_{\db}}{\to}  H^{p,q}_{\de + \db}  \to C^{p,q} \to 0
  \end{equation}
  \begin{equation}\label{6}
0 \to D^{p,q}  \to H^{p,q}_{\ddb} \stackrel{f_{\db}}{\to}  H^{p,q}_{\db}  \to \tilde E^{p,q} \to 0,
  \end{equation}
and also by conjugation:
$$\tilde D^{p,q} := D^{p,q} / A^{p,q} \simeq \frac{Ker \de^{p,q} \cap Im \db^{p,q-1}}{Im \de^{p-1,q} \cap Im \db^{p,q-1}}, \ \  \tilde C^{p,q} := C^{p,q} / F^{p,q} \simeq \frac{Ker \de^{p,q} + Ker \db^{p,q}}{Im \de^{p-1,q} + Ker \db^{p,q}},$$
and the corresponding sequences:
  \begin{equation}\label{7}
0 \to \tilde D^{p,q}  \to H^{p,q}_{\de}  \stackrel{g_{\de}}{\to}  H^{p,q}_{\de + \db}  \to E^{p,q} \to 0
  \end{equation}
  \begin{equation}\label{8}
0 \to B^{p,q}  \to H^{p,q}_{\ddb} \stackrel{f_{\de}}{\to}  H^{p,q}_{\de}  \to \tilde C^{p,q} \to 0.
  \end{equation}
\medskip

\begin{lemma} The vector spaces $A^{p,q}, B^{p,q}, C^{p,q}, D^{p,q}, E^{p,q}, F^{p,q}$ have been selected to produce the following useful facts (easy to check), which give a connection among the previous exact sequences:
\begin{enumerate}
\item $\forall \ p,q, \ 0 \leq p,q \leq n,$ the operator $\db$ gives the isomorphism $C^{p,q}  \stackrel{\db}{\to}  D^{p,q+1};$
\item $\forall \ p,q, \ 0 \leq p,q \leq n,$ the operator $\de$ gives the isomorphism $E^{p,q}  \stackrel{\de}{\to}  B^{p+1,q};$
\item $A^{p,q}$ and $F^{p,q}$ are  \lq\lq natural\rq\rq kernels and cokernels of maps involving $H^{p,q}_{\db}$ and $H^{p,q}_{\de}$ (compare (2.5), (2.6), (2.7)  and (2.8)).
\end{enumerate}
\end{lemma}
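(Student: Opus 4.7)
The plan is to prove each of the three items by a direct diagram chase from the defining formulas of the six quotient spaces; no deep input is required beyond the definitions, the inclusions of numerators and denominators, and the identity $\ddb = -\db\de$.

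For item (1), I would define the candidate map $C^{p,q} \to D^{p,q+1}$ by $[\alpha] \mapsto [\db\alpha]$, where $\alpha$ is any representative in $Ker \ddb^{p,q}$. Then $\db\alpha \in Im \db^{p,q}$ trivially, and $\de\db\alpha = 0$ by assumption, so the target lies in $Ker \de^{p,q+1} \cap Im \db^{p,q}$, hence in $D^{p,q+1}$. Well-definedness: if $\alpha = \de\beta + \gamma$ with $\db\gamma = 0$, then $\db\alpha = \db\de\beta = -\ddb\beta \in Im \ddb^{p-1,q}$, which is the denominator of $D^{p,q+1}$. Injectivity: if $\db\alpha = \ddb\beta$, then $\db(\alpha + \de\beta) = 0$, so $\alpha = -\de\beta + (\alpha + \de\beta)$ lies in $Im \de + Ker \db$, killing $[\alpha]$ in $C^{p,q}$. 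Surjectivity: any representative $\omega$ of a class in $D^{p,q+1}$ is by construction $\db\alpha$ for some $\alpha$ with $\de\omega = \ddb\alpha = 0$, so $[\alpha] \in C^{p,q}$ maps to $[\omega]$. Item (2) follows by the very same argument with $\de$ and $\db$ swapped: $E^{p,q} \to B^{p+1,q}$, $[\alpha] \mapsto [\de\alpha]$, is an isomorphism by verbatim transcription.

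For item (3), I would read the claim directly off the definitions. The identity induces a natural map $B^{p,q} \to H^{p,q}_{\db}$, sending the class of $\omega = \de\beta$ (with $\db\omega = 0$) to its Dolbeault class; well-definedness follows from $Im \ddb \subseteq Im \db$. The kernel consists of $[\omega]$ with $\omega$ both $\de$-exact and $\db$-exact, modulo $Im \ddb$, which is exactly $A^{p,q}$. By conjugation, $A^{p,q}$ is also the kernel of the analogous map $D^{p,q} \to H^{p,q}_{\de}$. Symmetrically, the identity induces $H^{p,q}_{\db} \to E^{p,q}$, and inspection of the quotients shows the cokernel is precisely $F^{p,q}$; conjugation yields the parallel map $H^{p,q}_{\de} \to C^{p,q}$ with cokernel again $F^{p,q}$. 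The only real obstacle is bookkeeping: one must match up carefully the subspaces appearing in numerator and denominator of each quotient and keep signs consistent via $\ddb = -\db\de$. This is precisely the sense in which the author declares the three items ``easy to check''.
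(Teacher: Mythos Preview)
Your proof is correct and is precisely the direct verification the paper has in mind; the lemma is stated without proof in the paper, with the parenthetical ``(easy to check)'' indicating exactly the kind of elementary diagram chase you have written out. There is nothing to compare: your argument is the intended one.
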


\begin{prop} We have the following isomorphisms:
\begin{enumerate}
\item $H^{p,q}_{\db} \simeq H^{n-p,n-q}_{\db}, \ \ H^{p,q}_{\de} \simeq H^{n-p,n-q}_{\de};$
 
\item $H^{p,q}_{\ddb} \simeq H^{n-p,n-q}_{\de + \db};$

\item $D^{p,q}  \simeq  C^{n-p,n-q}, \ \ B^{p,q}  \simeq  E^{n-p,n-q}, \ \ A^{p,q}  \simeq  F^{n-p,n-q};$

\item $B^{p+1,q}  \simeq  B^{n-p,n-q}, \ \  D^{p,q+1}  \simeq  D^{n-p,n-q}, \ \ E^{p-1,q}  \simeq  E^{n-p,n-q}, \ \ C^{p,q-1}  \simeq  C^{n-p,n-q}.$
\end{enumerate}
\end{prop}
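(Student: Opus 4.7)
The plan is to reduce everything to duality statements produced by the wedge-integral pairing $\langle\alpha,\beta\rangle := \int_M \alpha\wedge\beta$ on $\E^{p,q}(M)\times\E^{n-p,n-q}(M)$.

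For (1) I would invoke Serre duality for Dolbeault cohomology, $H^{p,q}_{\db}(M)\simeq H^{n-p,n-q}_{\db}(M)^{*}$, together with its $\partial$-analogue; since every finite-dimensional complex vector space is (noncanonically) isomorphic to its dual, the stated isomorphisms follow. For (2) I would use the Bott--Chern/Aeppli duality of Schweitzer, which says exactly that $\langle\,\cdot\,,\,\cdot\,\rangle$ descends to a perfect pairing $H^{p,q}_{\ddb}(M)\times H^{n-p,n-q}_{\de+\db}(M)\to\C$.

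For (3) I would check that the same wedge pairing descends to each of the three pairs and is non-degenerate. For instance, for $A^{p,q}$ versus $F^{n-p,n-q}$: if $\alpha=\de\gamma=\db\delta$ is in $\mathrm{Im}\,\de\cap\mathrm{Im}\,\db$ and $\beta\in\mathrm{Ker}\,\ddb$, then Stokes gives $\langle\ddb\eta,\beta\rangle=\pm\int_M\eta\wedge\ddb\beta=0$, so the pairing is independent of the representative modulo $\mathrm{Im}\,\ddb$ on the left; for the right factor one uses $\alpha=\de\gamma$ to annihilate $\mathrm{Ker}\,\de$ and $\alpha=\db\delta$ to annihilate $\mathrm{Ker}\,\db$, hence the whole subspace $\mathrm{Ker}\,\de+\mathrm{Ker}\,\db$. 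Non-degeneracy is the Hodge-theoretic point, and I would either appeal to Schweitzer's harmonic theory for the $(\ddb)$-elliptic complex or, alternatively, sidestep the metric argument by computing dimensions: taking Euler characteristics in the exact sequences (2.1)--(2.8), combined with the dualities in (1) and (2) already established, yields $\dim A^{p,q}=\dim F^{n-p,n-q}$ and analogously for $B\leftrightarrow E$ and $D\leftrightarrow C$. Since we work over $\C$ and everything is finite-dimensional, dimension equality gives the isomorphism.

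For (4) I would just chain (3) with Lemma 2.2. Concretely, for $B^{p+1,q}\simeq B^{n-p,n-q}$: Lemma 2.2(2) gives $E^{p,q}\stackrel{\de}{\simeq}B^{p+1,q}$, while (3) applied at bidegree $(n-p,n-q)$ gives $B^{n-p,n-q}\simeq E^{n-(n-p),n-(n-q)}=E^{p,q}$, so $B^{p+1,q}\simeq E^{p,q}\simeq B^{n-p,n-q}$. The other three identifications in (4) are obtained identically: $D^{p,q+1}\stackrel{2.2(1)}{\simeq}C^{p,q}\stackrel{(3)}{\simeq}D^{n-p,n-q}$, $E^{p-1,q}\stackrel{2.2(2)}{\simeq}B^{p,q}\stackrel{(3)}{\simeq}E^{n-p,n-q}$, and $C^{p,q-1}\stackrel{2.2(1)}{\simeq}D^{p,q}\stackrel{(3)}{\simeq}C^{n-p,n-q}$.

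The main obstacle is part (3): Serre duality and Schweitzer's duality are off-the-shelf, but the dualities between the auxiliary spaces $A,B,C,D,E,F$ are not classically packaged and need either a direct harmonic analysis for each quotient or a careful dimension count from the 5-term exact sequences. I would favour the dimension-count route, which is purely algebraic once (1) and (2) are in hand, but it requires some bookkeeping to track the alternating sums through (2.1), (2.2), (2.3), (2.4) and to use the $n-p,n-q$ versions of the same sequences.
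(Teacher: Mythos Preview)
Your plan for (1), (2), and (4) matches the paper's exactly: Serre duality, the Bott--Chern/Aeppli pairing, and then chaining (3) with Lemma~2.1 (which you call Lemma~2.2).

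For (3) your direct-pairing idea is correct in spirit, but the paper organises the argument more economically and thereby sidesteps the ``main obstacle'' you flag. Instead of building a pairing on $A\times F$ (or $D\times C$, $B\times E$) from scratch and then establishing non-degeneracy by fresh harmonic analysis or a dimension count, the paper exploits the exact sequences: $D^{p,q}$ \emph{injects} into $H^{p,q}_{\ddb}$ by (2.2) and $C^{n-p,n-q}$ is a \emph{quotient} of $H^{n-p,n-q}_{\de+\db}$ by (2.1), and both maps are induced by the identity. Hence the known perfect pairing $H^{p,q}_{\ddb}\times H^{n-p,n-q}_{\de+\db}\to\C$ from (2) descends to $D^{p,q}\times C^{n-p,n-q}$, and non-degeneracy is \emph{inherited}: if $[d]\in D^{p,q}$ annihilates every class in $C^{n-p,n-q}$, then by surjectivity it annihilates all of $H^{n-p,n-q}_{\de+\db}$, so $[d]=0$ in $H^{p,q}_{\ddb}$ by (2), hence in $D^{p,q}$ by injectivity. (Equivalently, $f_{\db}^{p,q}$ and $g_{\db}^{n-p,n-q}$ are transposes under the pairings of (1) and (2), so $\ker f_{\db}\simeq(\operatorname{coker}g_{\db})^{*}$.) Conjugation then gives $B^{p,q}\simeq E^{n-p,n-q}$, and the same inject/surject trick applied to $A^{p,q}\hookrightarrow B^{p,q}$ and $E^{n-p,n-q}\twoheadrightarrow F^{n-p,n-q}$ from (2.1)--(2.2) yields $A^{p,q}\simeq F^{n-p,n-q}$. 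No Hodge theory beyond what is already packaged in (2) is needed.

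A caution about your dimension-count alternative: the Euler characteristics of (2.1)--(2.4) together with (1) and (2) give only alternating-sum identities such as $d^{p,q}-e^{p,q}+f^{p,q}=a^{n-p,n-q}-b^{n-p,n-q}+c^{n-p,n-q}$, not the three separate equalities $d=c$, $b=e$, $a=f$. Extracting the individual equalities would require feeding in Lemma~2.1 and conjugation across many bidegrees, and it is not clear the resulting linear system closes; you have not shown that it does. The paper's subspace/quotient argument is both shorter and avoids this bookkeeping entirely.
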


\begin{proof} Statement (1) is well-known, and also (2) is known: there is a classical non-degenerate pairing 
$H^{p,q}_{\ddb} \times H^{n-p,n-q}_{\de + \db} \to \C$   given by $([\alpha],[\beta]) = \int_M \alpha \wedge \beta$, which gives the isomorphisms
$(H^{p,q}_{\ddb})^* \simeq H^{n-p,n-q}_{\de + \db}$ on compact manifolds.

To prove (3), let us consider the injective map induced by the identity $D^{p,q} \to H^{p,q}_{\ddb}$ given in (2.2) and the surjective map induced by the identity 
$H^{n-p,n-q}_{\de + \db} \to C^{n-p,n-q}$ given in (2.1). As above, we denote by $[\alpha]$ the (opportune) cohomology class of the form $\alpha$.

It is easy to check that the above non degenerate pairing $H^{p,q}_{\ddb} \times H^{n-p,n-q}_{\de + \db} \to \C$
induces a pairing $D^{p,q}\times C^{n-p,n-q} \to \C$, which is non degenerate:
f.i., suppose $[d] \in D^{p,q}$ such that $([d],[c]) = 0 \   \forall \ [c] \in C^{n-p,n-q}$. But $[c]= [w]_{\de + \db}$, with $0 = [c-w] \in C^{n-p,n-q}$, thus since the pairing does not depends on representatives,  $([d],[w]) = 0 \   \forall \ [w] \in H^{n-p,n-q}_{\de + \db}$, which gives $[d]=0$. This proves that $D^{p,q}  \simeq  C^{n-p,n-q}$, and, by conjugation, that $B^{p,q}  \simeq  E^{n-p,n-q}$.

Now, let us consider the injective map induced by the identity $A^{p,q} \to B^{p,q}$ given in (2.1) and the surjective map induced by the identity 
$E^{n-p,n-q} \to F^{n-p,n-q}$ given in (2.2): we can repeat the same considerations as above, to get  $A^{p,q}  \simeq  F^{n-p,n-q}.$

By Lemma 2.1, we get also (4). 
\end{proof}
\medskip

\begin{lemma} 
\begin{enumerate}
\item The following vector spaces are $1-$dimensional: 

$H^{n,n}_{\db}, \  H^{0,0}_{\db}, \ H^{n,n}_{\de}, \  H^{0,0}_{\de}, \ H^{n,n}_{\ddb}, \ H^{n,n}_{\de + \db}, \ H^{0,0}_{\ddb}, \ H^{0,0}_{\de + \db}.$
\item The following vector spaces vanish:

$C^{0,0}, \ E^{0,0}, \ F^{0,0}, \ B^{1,0}, \ D^{0,1}, \ A^{n,n},  \  B^{n,n}, \  D^{n,n}, \ C^{n,n-1}, \ E^{n-1,n},$

and also, for every $p$,

$A^{p,0},  \  D^{p,0}, \  A^{0,p}, \ B^{0,p}, \ F^{p,n}, \ C^{p,n}, \ E^{n,p}, \ F^{n,p}.$
\end{enumerate}
\end{lemma}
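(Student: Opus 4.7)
My plan rests on three inputs. First, several of the listed spaces are quotients involving $Im \de^{-1,q}$, $Im \db^{p,-1}$, or---dually---kernels at top bidegree that are automatically the whole space, and these vanish for trivial reasons. Second, on a compact connected complex manifold $M$, any smooth function $f$ with $\ddb f = 0$ is constant: the real and imaginary parts are pluriharmonic, and the maximum principle applied on holomorphic disks, combined with connectedness of $M$, forces constancy. Third, the duality isomorphisms in Proposition 2.2 reduce many statements at bidegrees near $(n,n)$ to their counterparts at bidegrees near $(0,0)$.

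For Part (1), I would first compute $H^{0,0}_{\db}$, $H^{0,0}_{\de}$, $H^{0,0}_{\ddb}$: in each case the denominator is zero, and the numerator consists of holomorphic, antiholomorphic, or $d$-closed functions, all equal to $\C$ by compactness and connectedness. For $H^{0,0}_{\de + \db} = Ker \ddb^{0,0}$ I would invoke input (ii) to conclude that this space is also $\C$. The four spaces at bidegree $(n,n)$ then follow at once: $H^{n,n}_{\db}$ and $H^{n,n}_{\de}$ from Proposition 2.2(1), and $H^{n,n}_{\ddb} \simeq H^{0,0}_{\de + \db}$, $H^{n,n}_{\de + \db} \simeq H^{0,0}_{\ddb}$ from Proposition 2.2(2).

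For Part (2), I would group the vanishings as follows. The spaces $A^{p,0}$, $A^{0,p}$, $B^{0,p}$, $D^{p,0}$ vanish by input (i), since they involve $Im \de^{-1,\cdot} = 0$ or $Im \db^{\cdot,-1} = 0$. Dually, on $\E^{p,n}$ one has $Ker \db^{p,n} = \E^{p,n}$, and similarly for $\de$ on $\E^{n,p}$, so in each of $F^{p,n}$, $C^{p,n}$, $E^{n,p}$, $F^{n,p}$ the denominator already contains the full numerator $Ker \ddb$; these can also be read off from Proposition 2.2(3), e.g.\ $F^{p,n} \simeq A^{n-p,0}$, $C^{p,n} \simeq D^{n-p,0}$. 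The vanishings $C^{0,0}$, $E^{0,0}$, $F^{0,0}$ follow because, by input (ii), $Ker \ddb^{0,0}$ is exactly the constants, and each of the relevant denominators contains the constants. The vanishings $B^{1,0} = 0$ and $D^{0,1} = 0$ are again direct applications of input (ii): if $\de f \in Ker \db^{1,0}$, then $\ddb f = 0$, hence $f$ is constant and $\de f = 0$. Finally, $A^{n,n}$, $B^{n,n}$, $D^{n,n}$ reduce to $F^{0,0}$, $E^{0,0}$, $C^{0,0}$ via Proposition 2.2(3), while $C^{n,n-1}$ and $E^{n-1,n}$ reduce to $C^{0,0}$ and $E^{0,0}$ via Proposition 2.2(4).

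The main (indeed, essentially the only) conceptual point in the whole lemma is input (ii)---the maximum-principle statement that a smooth $\ddb$-closed function on a compact connected complex manifold is constant. Everything else is a direct inspection of the definitions combined with the index-shifting isomorphisms of Proposition 2.2, so I expect no further obstacle.
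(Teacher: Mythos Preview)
Your proposal is correct and follows exactly the line the paper indicates: it invokes Proposition~2.2 for the dualities and otherwise reads off the vanishings directly from the definitions, the only substantive input being that $\ddb$-closed functions on a compact connected complex manifold are constant. The paper's own proof is simply the sentence ``Assertion (1) is well-known; the statement in (2) can be checked by easy computations, using also Proposition 2.2,'' so you have merely spelled out what the paper leaves implicit.
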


\begin{proof} Assertion (1) is well-known; the statement in (2) can be checked by easy computations, using also Proposition 2.2. 
\end{proof}
\medskip

\begin{lemma} For every $p,q$ with $0 \leq p,q \leq n$, we have the following equalities and inequalities:
\begin{enumerate}
\item 
$b^{p,q} = \tilde b^{p,q} + a^{p,q}, \  d^{p,q} = \tilde d^{p,q} + a^{p,q}, \ c^{p,q} = \tilde c^{p,q} + f^{p,q}, \ e^{p,q} = \tilde e^{p,q} + f^{p,q};$ 

\item $b^{p,q} \leq h^{p,q}_{\ddb}, \ d^{p,q} \leq h^{p,q}_{\ddb}, \ c^{p,q} \leq h^{p,q}_{\de + \db}, \ e^{p,q} \leq h^{p,q}_{\de + \db};$

\item 
$h^{p,q}_{\de + \db} + \tilde b^{p,q} = h^{p,q}_{\db} + c^{p,q}, \ h^{p,q}_{\de + \db} + \tilde d^{p,q} = h^{p,q}_{\de} + e^{p,q};$

$h^{p,q}_{\ddb} + \tilde e^{p,q} = h^{p,q}_{\db} + d^{p,q}, \ h^{p,q}_{\ddb} + \tilde c^{p,q} = h^{p,q}_{\de} + b^{p,q};$

\item $h^{p,q}_{\de + \db} + h^{p,q}_{\ddb}  = h^{p,q}_{\de} + h^{p,q}_{\db} + a^{p,q} + f^{p,q};$

\item
$h^{0,q}_{\db} \leq h^{0,q}_{\de + \db}, \ h^{0,q}_{\ddb} \leq h^{0,q}_{\de}, \  h^{p,0}_{\ddb} \leq h^{p,0}_{\db}, \  h^{p,0}_{\de} \leq h^{p,0}_{\de + \db};$

\item
$h^{p,n}_{\de} \leq h^{p,n}_{\ddb}, \ h^{p,n}_{\de + \db} \leq h^{p,n}_{\db}, \  h^{n,q}_{\db} \leq h^{n,q}_{\ddb}, \  h^{n,q}_{\de + \db} \leq h^{n,q}_{\de}.$

\end{enumerate}
\end{lemma}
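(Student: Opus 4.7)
The proof is largely bookkeeping on the four exact sequences (2.1)--(2.8) together with the vanishing results of Lemma 2.3, so the plan is to handle each item in turn.

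For (1), the definitions $\tilde B^{p,q}:=B^{p,q}/A^{p,q}$, $\tilde D^{p,q}:=D^{p,q}/A^{p,q}$, $\tilde C^{p,q}:=C^{p,q}/F^{p,q}$, $\tilde E^{p,q}:=E^{p,q}/F^{p,q}$ yield short exact sequences of finite dimensional spaces, and additivity of dimension gives the four identities at once. For (2), I would read off the injections $B^{p,q}\hookrightarrow H^{p,q}_{\ddb}$, $D^{p,q}\hookrightarrow H^{p,q}_{\ddb}$ from (2.2) and (2.4), and dually the surjections $H^{p,q}_{\de+\db}\twoheadrightarrow C^{p,q}$, $H^{p,q}_{\de+\db}\twoheadrightarrow E^{p,q}$ from (2.1) and (2.3); these are the content of the four inequalities.

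For (3), I would apply the alternating-sum-of-dimensions rule to each of the four 4-term exact sequences (2.5), (2.6), (2.7), (2.8), which rewrites nicely as one equation per sequence (e.g.\ from (2.5): $\tilde b^{p,q}-h^{p,q}_{\db}+h^{p,q}_{\de+\db}-c^{p,q}=0$). For (4), the slickest route is to use instead the original five-term sequences (2.1)--(2.4): writing down the alternating-sum identity for each of the four and summing them, the $\tilde{\phantom b}$ terms need not appear because the full $b,c,d,e$ occur with matching signs; the terms $a^{p,q}$ and $f^{p,q}$ each appear twice, and the cohomologies $h^{p,q}_{\de}$, $h^{p,q}_{\db}$, $h^{p,q}_{\ddb}$, $h^{p,q}_{\de+\db}$ each appear twice with the correct signs, and dividing by $2$ yields (4). (Alternatively, one can add the two identities in (3) and simplify using (1).)

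Finally, (5) and (6) are obtained by combining the vanishings in Lemma 2.3(2) with the appropriate sequence. For instance, $A^{0,q}=B^{0,q}=0$ turns (2.1) with $p=0$ into an injection $H^{0,q}_{\db}\hookrightarrow H^{0,q}_{\de+\db}$; $B^{0,q}=0$ turns (2.4) with $p=0$ into $H^{0,q}_{\ddb}\hookrightarrow H^{0,q}_{\de}$; $D^{p,0}=0$ gives $H^{p,0}_{\ddb}\hookrightarrow H^{p,0}_{\db}$ from (2.2); and $A^{p,0}=D^{p,0}=0$ gives $H^{p,0}_{\de}\hookrightarrow H^{p,0}_{\de+\db}$ from (2.3). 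The four inequalities in (6) then follow from (5) by applying the Serre-type duality isomorphisms of Proposition 2.2 (sending $(p,q)$ to $(n-p,n-q)$ and swapping $H_{\ddb}\leftrightarrow H_{\de+\db}$ while fixing $H_{\de}$, $H_{\db}$). There is no serious obstacle; the one place that requires a moment of care is (4), where one must check that the bookkeeping of the four alternating sums really collapses to the clean expression on the right-hand side.
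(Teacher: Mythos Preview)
Your proposal is correct and is precisely the expansion of the paper's one-line proof (``Easy computations, using the exact sequences and Lemma 2.3''). Every item is handled exactly as intended: (1) from the definitions of the tilde spaces, (2)--(3) from the exactness of (2.1)--(2.8), (4) by combining these identities, and (5)--(6) by plugging the vanishings of Lemma~2.3 (together with Proposition~2.2 for (6)) into the same sequences.
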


\begin{proof} Easy computations, using the exact sequences and Lemma 2.3. 
\end{proof}
\medskip

\begin{lemma} For $p+q=1$ or $p+q=2n-1$, we have the following inequalities:
\begin{enumerate}
\item 
$h^{1,0}_{\ddb} \leq  h^{1,0}_{\db} \leq h^{1,0}_{\de + \db}, \ \ h^{1,0}_{\ddb} \leq  h^{1,0}_{\de} \leq h^{1,0}_{\de + \db};$ 

\item $h^{0,1}_{\ddb} \leq  h^{0,1}_{\db} \leq h^{0,1}_{\de + \db}, \ \ h^{0,1}_{\ddb} \leq  h^{0,1}_{\de} \leq h^{0,1}_{\de + \db};$

\item 
$h^{n,n-1}_{\de +\db} \leq  h^{n,n-1}_{\db} \leq h^{n,n-1}_{\ddb}, \ \ h^{n,n-1}_{\de + \db} \leq  h^{n,n-1}_{\de} \leq h^{n,n-1}_{\ddb};$

\item $h^{n-1,n}_{\de + \db} \leq  h^{n-1,n}_{\db} \leq h^{n-1,n}_{\ddb}, \ \ h^{n-1,n}_{\de + \db} \leq  h^{n-1,n}_{\de} \leq h^{n-1,n}_{\ddb}.$
\end{enumerate}
\end{lemma}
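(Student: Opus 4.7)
The plan is to extract all inequalities from the exact sequences (2.1)--(2.4) at the boundary bidegrees, using the vanishings recorded in Lemma 2.3, and then transport the result to the conjugate-dimension bidegrees by means of the Serre-type duality of Proposition 2.2.

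I would first handle item (1) at $(p,q)=(1,0)$. Lemma 2.3 supplies $A^{1,0}=B^{1,0}=D^{1,0}=0$ (the first from the vanishing of $A^{p,0}$, the third from the vanishing of $D^{p,0}$, the second from the explicit list). Reading (2.2) at $(1,0)$ then produces an injection $H^{1,0}_{\ddb}\hookrightarrow H^{1,0}_{\db}$; reading (2.1) produces $H^{1,0}_{\db}\hookrightarrow H^{1,0}_{\de + \db}$; (2.4) yields $H^{1,0}_{\ddb}\hookrightarrow H^{1,0}_{\de}$; and (2.3) yields $H^{1,0}_{\de}\hookrightarrow H^{1,0}_{\de + \db}$. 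Passing to dimensions proves (1). Item (2) is obtained in exactly the same way at bidegree $(0,1)$, since $A^{0,1}=B^{0,1}=D^{0,1}=0$ is likewise listed in Lemma 2.3 (the first two among $A^{0,p}, B^{0,p}$, the third explicitly).

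For (3) and (4) I would invoke Proposition 2.2. Its parts (1)--(2) give $h^{n,n-1}_{\db}=h^{0,1}_{\db}$, $h^{n,n-1}_{\de}=h^{0,1}_{\de}$, $h^{n,n-1}_{\ddb}=h^{0,1}_{\de + \db}$ and $h^{n,n-1}_{\de + \db}=h^{0,1}_{\ddb}$. Since duality exchanges $\ddb$ with $\de+\db$ while leaving $\db$ and $\de$ intact (up to the swap $p \leftrightarrow q$), the chains $h^{0,1}_{\ddb}\le h^{0,1}_{\db}\le h^{0,1}_{\de+\db}$ and $h^{0,1}_{\ddb}\le h^{0,1}_{\de}\le h^{0,1}_{\de+\db}$ from item (2) translate into the reversed chains asserted in (3); identically, (4) follows from (1) via the bidegree $(n-1,n)$.

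I do not anticipate any genuine obstacle: the statement is a purely formal consequence of the exact sequences and the vanishing table already established. The only bookkeeping is to keep track of which of $A^{p,q}$, $B^{p,q}$, $D^{p,q}$ vanishes at each boundary bidegree so that the correct arrow in each of (2.1)--(2.4) becomes injective; Lemma 2.3 makes this entirely transparent.
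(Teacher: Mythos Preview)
Your argument is correct and is exactly the kind of ``easy computation using the previous lemmas'' that the paper alludes to: you read off the injections from the exact sequences (2.1)--(2.4) after plugging in the vanishings of Lemma~2.3 at the boundary bidegrees, and then transport (1)--(2) to (3)--(4) via the duality of Proposition~2.2. One tiny remark: the parenthetical ``up to the swap $p\leftrightarrow q$'' is unnecessary, since Proposition~2.2(1) already gives $h^{n,n-1}_{\db}=h^{0,1}_{\db}$ and $h^{n,n-1}_{\de}=h^{0,1}_{\de}$ directly, without any conjugation; the numbers you write down are nonetheless the right ones.
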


\begin{proof} Easy computations, using the previous lemmas. 
\end{proof}

\medskip

Now we can give the definition of regular manifold in the spirit of \cite{Va}: the equivalence of the conditions is given by Lemma 2.1.

\begin{defn} A compact complex manifold is called {\bf regular} if one of the following equivalent conditions hold:
\begin{enumerate}
\item $\forall  p,q,$ it holds: $Ker \ddb^{p,q} \simeq Ker \de^{p,q} + Im \db^{p,q-1}$, i.e. $e^{p,q} =0;$
\item $\forall  p,q,$ it holds: ${Ker \ddb^{p,q}} \simeq {Im \de^{p-1,q} + Ker \db^{p,q}}$, i.e. $c^{p,q} =0;$
\item $\forall  p,q,$ it holds: ${Im \de^{p-1,q} \cap Ker \db^{p,q}} \simeq {Im \ddb^{p-1,q-1}}$, i.e. $b^{p,q} =0;$
\item $\forall  p,q,$ it holds: ${Ker \de^{p,q} \cap Im \db^{p,q-1}} \simeq {Im \ddb^{p-1,q-1}}$, i.e. $d^{p,q} =0.$
\end{enumerate}
\end{defn}

\bigskip

 \section{The setting in the paper of Deligne, Griffiths, Morgan, Sullivan \cite{DGMS}}
 
In this fundamental work, $dd^c-$ (or $\ddb-$)manifolds are studied, to prove the following main result (p. 270, Main Theorem):

\lq\lq Let $M$ be a compact complex manifold, for which the $dd^c-$ (or $\ddb-$)Lemma holds. Then the real homotopy type of $M$ is a formal consequence of the cohomology ring $H^*(M,\R)$.\rq\rq

The $dd^c-$Lemma is given, in the context of a general double complex, in Lemma 5.15 ibidem, as follows:

\lq\lq Let $(K^{*,*}, d', d'')$ be a bounded double complex of vector spaces, and let $(K^*,d)$ be the associated simple complex ($d=d' + d''$). For each integer $k$, $1 \leq k \leq n$, the following conditions are equivalent:

$(a_k)$ \ \ \ in $K^k$, $Ker d' \cap Ker d'' \cap Im d = Im d'd''$

$(b_k)$ \ \ \ in $K^k$, $Ker d'' \cap Im d' = Im d'd''$ \ and \ $Ker d' \cap Im d''  = Im d'd''$

$(c_k)$ \ \ \ in $K^k$, $Ker d' \cap Ker d'' \cap (Im d' + Im d'') = Im d'd''$

$(a^*_{k-1})$ \ \ \ in $K^{k-1}$, $Im d' + Im d'' +Ker d = Ker d'd''$

$(b^*_{k-1})$ \ \ \ in $K^{k-1}$, $Ker d' + Im d''  = Ker d'd''$ \ and \  $Ker d'' + Im d' = Ker d'd''$

$(c^*_{k-1})$ \ \ \ in $K^{k-1}$, $Im d' + Im d'' + (Ker d' \cap Ker d'') = Ker d'd''$. \rq\rq
\bigskip

For a compact complex $n-$dimensional manifold $M$, let us consider the double complex $(\E^{p,q}(M), \de, \db)$ with associated simple complex 
$(\E^k(M) = \oplus_{p+q=k} \E^{p,q}(M), \de + \db = d)$.

In this setting, the authors say that a manifold satisfies the $dd^c-$Lemma, when $(a_k)$ holds for every $k$, and 
satisfies the $\ddb-$Lemma, when $(b_k)$ holds for every $k$.

Comparing with conditions (3) and (4) in Definition 2.6, we get:

\begin{prop} A compact complex manifold is  regular (in the sense of \cite{Va}) if and only if it satisfies the $\ddb-$Lemma (in the sense of \cite{DGMS}).
\end{prop}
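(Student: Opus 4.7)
The plan is to unfold both notions in bidegrees and match them termwise, with everything but the bookkeeping already packaged into Lemma 2.1 and Definition 2.6.

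First, I would specialize the DGMS setting to our double complex by taking $d' = \de$, $d'' = \db$, so that $d'd'' = \ddb$ (up to sign) and $K^k = \bigoplus_{p+q=k}\E^{p,q}(M)$ with the total differential $d = \de + \db$. The point to exploit is that $\de$ and $\db$ are bihomogeneous of types $(1,0)$ and $(0,1)$: hence the $d''$-closedness, the $d'$-exactness, and the $d'd''$-exactness of $\alpha = \sum_{p+q=k}\alpha^{p,q}$ each split into the corresponding bidegree-wise assertions on the components $\alpha^{p,q}$. Consequently, the DGMS condition $(b_k)$ holding for every $k$ is equivalent to the statement that, for every $(p,q)$ with $0\leq p,q\leq n$,
\[
\text{Im}\,\de^{p-1,q}\cap\ker\db^{p,q}=\text{Im}\,\ddb^{p-1,q-1}
\quad\text{and}\quad
\ker\de^{p,q}\cap\text{Im}\,\db^{p,q-1}=\text{Im}\,\ddb^{p-1,q-1}.
\]

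Second, I would read off that these two identities are precisely $B^{p,q}=0$ and $D^{p,q}=0$ in the notation of Section 2. Thus the DGMS $\ddb$-Lemma is equivalent to the simultaneous validity, for all $(p,q)$, of conditions (3) and (4) of Definition 2.6.

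Third, I would invoke Definition 2.6 itself: its four conditions are equivalent, so requiring either (3) alone or (4) alone is enough to recover regularity in the sense of Varouchas, and a fortiori so is requiring both. The internal equivalences of Definition 2.6 rest on Lemma 2.1, which provides the identifications $E^{p,q}\stackrel{\de}{\simeq}B^{p+1,q}$ and $C^{p,q}\stackrel{\db}{\simeq}D^{p,q+1}$ (supplemented by the boundary vanishings of Lemma 2.3), together with the conjugation isomorphisms $B^{p,q}\simeq D^{q,p}$ and $C^{p,q}\simeq E^{q,p}$ noted in Section 2. With this, the two implications are immediate and the proposition follows.

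No step here is genuinely hard: the only care needed is the bookkeeping in the first step, where one must verify that $d''\alpha=0$, $\alpha\in\text{Im}\,d'$, and $\alpha\in\text{Im}\,d'd''$ all decompose cleanly along the bigrading, so that $(b_k)$ for all $k$ is equivalent to its bihomogeneous counterpart. After that, the proposition is nothing more than a relabeling through Definition 2.6.
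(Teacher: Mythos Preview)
Your proposal is correct and follows essentially the same route as the paper: the paper's argument is the single sentence preceding the proposition, namely that condition $(b_k)$ of \cite{DGMS}, once decomposed along bidegrees, is exactly the pair of vanishings $B^{p,q}=0$ and $D^{p,q}=0$, i.e.\ conditions (3) and (4) of Definition~2.6. Your write-up is simply a more explicit unpacking of that observation, including the (correct) remark that the internal equivalences of Definition~2.6 come from Lemma~2.1 and conjugation.
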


To support the careful reader, let us check explicitly the above equivalences in our case $(\E^{p,q}(M), \de, \db)$, since the last one is not straightforward.

{\bf Remarks.} As regards the conditions given above, it holds, for every $k$:

\begin{enumerate}
\item $(a_{k}) \ \iff \ (a^*_{k-1})$

Suppose $(a_{k})$ holds, and let $u = u^{k-1} \in Ker \ddb$; then $v = v^k := du$ belongs to $Ker \de \cap Ker \db \cap Im d = Im \ddb$, thus $du = \ddb w$. This gives 
$$u = (u -  \frac{1}{2} \db w +  \frac{1}{2} \de w)  -  \frac{1}{2} \de w + \frac{1}{2} \db w \in Ker d + Im \de + Im \db,$$
 since $du = d( \frac{1}{2} \db w) - d( \frac{1}{2} \de w)$.
 
 Suppose now $v = v^k \in Ker \de \cap Ker \db \cap Im d$; then $v = du = du^{k-1},$  with $u \in Ker \ddb = Ker d + Im \de + Im \db,$ that is, $u = \de a + \db b + w, \ w \in Ker d$, so that $v = du = \ddb (b-a) \in Im \ddb$.
\medskip
 
 \item $(b_{k}) \ \iff \ (b^*_{k-1})$

Suppose $(b_{k})$ holds, and let $u = u^{k-1} \in Ker \ddb$, so that $\db u \in Ker \de \cap Im \db = Im \ddb$ and $\de u \in Ker \db \cap Im \de = Im \ddb$.  
Hence $\db u = \ddb a, \ \de u = \ddb b$, which gives 
$$u = (u + \de a) -\de a \in Ker \db + Im \de, \ u= (u-\db b) + \db b \in Ker \de + Im \db.$$

 Suppose now $v = v^k \in Im \de \cap Ker \db$ and $w = w^k \in Ker \de \cap Im \db$; then $v = \de u, u = u^{k-1} \in Ker \ddb = Ker \de + Im \db,$ (that is, $u = \db a + r, \ r \in Ker \de$), and $w = \db y, y = y^{k-1} \in Ker \ddb = Im \de + Ker \db,$  (that is, $y = \de b + s, \ s \in Ker \db$), so that $v = \de u = \ddb a \in Im \ddb$,
 $w = \db y = - \ddb b \in Im \ddb$.
\medskip
 
 \item $(c_{k}) \ \iff \ (c^*_{k-1})$

Suppose $(c_{k})$ holds, and let $u = u^{k-1} \in Ker \ddb$; then $v = v^k := \de u$ and  $w = w^k := \db u$ both belong to $Ker \de \cap Ker \db \cap (Im \de + Im \db)$, thus $v = \ddb a, w = \ddb b$. This gives 
$$u = (u -  \db a + \de b) -  \de b +  \db a  \in (Ker \de \cap Ker \db) + Im \de + Im \db.$$
 
 Suppose now $v = v^k \in Ker \de \cap Ker \db \cap (Im \de + Im \db)$; then $v = \de u + \db w$, with $u,w  \in Ker \ddb = (Ker \de \cap Ker \db) + Im \de + Im \db,$ so that $u = \de a + \db a' + r, \ r \in Ker \de \cap Ker \db$, and $w = \de b + \db b' + s, \ s \in Ker \de \cap Ker \db$, hence $v = \de u + \db w = \ddb (a' - b)$.
\medskip
 
 \item $(b_{k}) \ \iff \ (c_{k})$

Let $v = v^{k} = \de u + \db w, \ u,w \in Ker \ddb$, so that $\db w \in Ker \de \cap Im \db = Im \ddb$ and $\de u \in Im \de \cap Ker \db = Im \ddb$.  
Hence $v \in Im \ddb$. The other side is straightforward.
\medskip
 
  \item $(c^*_{k-1}) \ \iff \ (a^*_{k-1})$

The assertion is trivial when $k=1$. 
To prove one side of the statement, notice that $(Ker \de \cap Ker \db) + Im \de + Im \db \subseteq Ker d + Im \de + Im \db.$

On the other hand, let $u = u^{k-1} \in Ker \ddb = Ker d + Im \de + Im \db$; then $u = \de a + \db b + U, \ U = U^{k-1} \in Ker d.$ 

Notice that, for a generic $r-$form $v$ $(r \geq 1)$, the condition : \lq\lq $v \in Ker d$\rq\rq does not imply \lq\lq $v \in Ker \de \cap Ker \db$\rq\rq, but it is equivalent to 
$$\de v^{r,0}=0, \ \ \db v^{r-j,j} + \de v^{r-j-1,j+1}=0, \ \ \db v^{0,r} =0$$
 for $j=0, \dots, r-1$.

As a matter of fact, the operators $\de$ and $\db$ are defined on $\E^{p,q}(M)$, and then extended to $\E^{r}(M)$ by linearity: thus, when $v \in \E^{r}(M),$ 
\lq\lq $v \in Ker \de \cap Ker \db$\rq\rq means that $\de v^{p,q} = 0, \  \db v^{p,q} = 0$ for every $p,q$ with $p+q=r$.

Hence, for every fixed $j=0, \dots, k-2$, let us consider $v = v^k =d(U^{k-1-j,j})$; 
$v \in Ker \de \cap Ker \db \cap Im d$, since $\de v = \ddb U^{k-1-j,j} = \de (- \de U^{k-2-j,j+1}) = 0$, and also $\db v =0$.

But we proved above that $(a_{k}) \ \iff \ (a^*_{k-1})$, so that $v \in Im \ddb$, i.e. 
$\de U^{k-1-j,j} = \ddb r, \  r = r^{k-1-j,j-1}$ and $\db U^{k-1-j,j} = \ddb s, \  s = s^{k-2-j,j}$. 

This gives 
$$\tilde  U^{k-1-j,j} := U^{k-1-j,j} -  \db r +  \de s \in Ker \de \cap Ker \db$$
and 
$$u = \de a + \db b + \Sigma_{j=0}^{k-1} U^{k-1-j,j} = \de \tilde a + \db \tilde b + \Sigma_{j=0}^{k-1} \tilde U^{k-1-j,j}$$ 
as required, where $\tilde a$ contains as summands $a$ and the components of type $s$, and $\tilde b$ contains $b$ and those of type $r$.
\end{enumerate}
\bigskip

  \section{Weak $\ddb-$conditions}

As we said in the introduction, our version of the $\ddb-$Lemma is the following (see also \cite{De}):

\begin{defn} ({\bf $\ddb-$Lemma and $\ddb-$manifold})  We say that a compact complex manifold $M$ is a $\ddb-$ma\-nifold, when  the $\ddb-$Lemma holds for $M$, that is, for every couple of indices $(p,q)$ and for every $d-$closed $(p,q)-$form $u$, the various exactness conditions are equivalent (i.e. $u$ is $d-$exact if and only if it is $\de-$exact, or 
$\db-$exact, or $\ddb-$exact).
\end{defn}

Since a $(p,q)-$form which is $\ddb-$exact, is obviously also $d-$exact, $\de-$exact and $\db-$exact, the $\ddb-$Lemma given in Definition 4.1 is equivalent to say that, for every couple of indices $(p,q)$ and for every  $(p,q)-$form $u \in Ker d = Ker \de \cap Ker \db$, with $u \in Im \de$ (or $u \in Im \db$ or $u \in Im d$), then $u \in Im \ddb$: this is precisely condition $(b_k)$ or $(a_k)$ of \cite{DGMS} for $k=p+q$, as we have seen in Section 3. Hence from now on, we can use several point of view about the $\ddb-$Lemma: the above definition, or the setting in \cite{DGMS}, or the setting in \cite{Va}.
\medskip

Notice that we can complete Definition 4.1 as follows: 
for every couple of indices $(p,q)$ and for every $d-$closed $(p,q)-$form $u$, the following conditions are equivalent:
\begin{enumerate} 
\item $u$ is $d-$exact
\item $u$ is $\de-$exact
\item  $u$ is $\db-$exact
\item $u$ is $\ddb-$exact
\item $u$ is the $(p,q)-$component of a boundary, that is, there are a $(p-1,q)-$form $a$ and a $(p,q-1)-$form $b$ such that $u = \de a + \db b$.
\end{enumerate}

Indeed,  $u = \ddb v = \de (\db v/2) + \db (- \de v/2)$ (and when the form is real, $u = i \ddb v$ for a real form $v$, implies $u = \de (i \db v/2) + \db (-i  \de v/2)$). On the other hand, when $u = \de a + \db b$, by $\db u = 0$ we get that the $d-$closed $(p,q)-$form 
$\de a \in Im \de = Im \ddb$ by the equivalence of the previous conditions, and similarly
$\db b \in Im \db = Im \ddb$, hence $u$ is $\ddb-$exact.

\bigskip
We shall introduce now {\bf weak forms} of the $\ddb-$Lemma, that is, with a fixed couple of indices $(p,q)$, as it was partially done  in some recent papers (see Remark 4.7.1). As a matter of fact, a weak form of  the $\ddb-$Lemma has been proposed for the first time in \cite{FY}, studying deformations of balanced manifolds.
\medskip

But firstly we shall also consider suitable maps, induced by the identity map on cohomology classes, starting from sequences (2.4), (2.2), (2.7), (2.5). For every couple $(p,q)$, we have indeed the following commutative diagram (which is also well known in the literature cited in Section 2):
\[
\begin{array}{ccccc}
\ & \  & H^{p,q}_{\ddb} & \ & \ \\
\ & \swarrow & \  & \searrow & \ \\
H^{p,q}_{\de} & \ & \downarrow & \ & H^{p,q}_{\db} \\
\ & \searrow & \  & \swarrow & \ \\
\ & \  & H^{p,q}_{\de + \db} & \ & \ \\
\end{array}
\]

where, more precisely,
$$f_{\de} := i^{p,q}_{\ddb,\de} : H^{p,q}_{\ddb} \to H^{p,q}_{\de},\ \text{whose Kernel is } B^{p,q}$$
$$f_{\db} := i^{p,q}_{\ddb,\db} : H^{p,q}_{\ddb} \to H^{p,q}_{\db}, \ \text{whose Kernel is } D^{p,q}$$
$$g_{\de} := i^{p,q}_{\de,\de + \db} : H^{p,q}_{\de} \to H^{p,q}_{\de+ \db}, \ \text{whose Kernel is } \tilde D^{p,q}$$
$$g_{\db} := i^{p,q}_{\db,\de + \db} : H^{p,q}_{\db} \to H^{p,q}_{\de + \db}, \ \text{whose Kernel is } \tilde B^{p,q}$$
$$i^{p,q}_{\ddb,\de + \db} : H^{p,q}_{\ddb} \to H^{p,q}_{\de + \db}.$$
\medskip

Notice also that, by Section 2, 
\begin{enumerate}
\item $f_{\de} := i^{p,q}_{\ddb,\de}$ is surjective $\iff  \tilde C^{p,q}=0$
\item $f_{\db} := i^{p,q}_{\ddb,\db}$ is surjective $\iff  \tilde E^{p,q}=0$
\item $g_{\de} := i^{p,q}_{\ddb,\de}$ is surjective $\iff  E^{p,q}=0$
\item $g_{\db} := i^{p,q}_{\ddb,\db}$ is surjective $\iff  C^{p,q}=0.$
\end{enumerate}

\medskip
\begin{defn}
Let $M$ be a complex manifold of dimension $n$, let $0 \leq p,q \leq n$.
We say that $M$ is a  {\bf $(p,q)-$mild $\ddb-$manifold}, or that on $M$ it holds the {\bf $(p,q)-$th mild $\ddb-$Lemma} (condition $\B^{p,q}$ in \cite{ZR}, Definition 1.8, or Definition 3.1  in \cite{RWZ2}, or condition $\BB^{p,q}$ in \cite{RZ}, Notation 3.5), when one of the following equivalent conditions holds:
\begin{enumerate}
\item $B^{p,q} = O$
\item $b^{p,q} = 0$
\item $f_{\de} := i^{p,q}_{\ddb,\de}$ is injective
\item For all $\omega \in \E^{p-1,q}(M)$ with $\ddb \omega = 0$, there is $\alpha \in \E^{p-1,q-1}(M)$ with $\de \omega = \ddb \alpha.$
\end{enumerate}
\end{defn}

Notice that, if $q=0, p \geq 1$, by $B^{p,0} \simeq E^{p-1,0}$, condition (4) means that  there is no  $\omega \in \E^{p-1,0}(M)$ such that $\ddb \omega = 0, \de \omega \neq 0$.

\begin{defn}
Let $M$ be a complex manifold of dimension $n$, let $0 \leq p,q \leq n$.
We say that $M$ is a   {\bf $(p,q)-$dual mild $\ddb-$manifold}, or that on $M$ it holds the {\bf $(p,q)-$th dual mild $\ddb-$Lemma} (see \cite{RWZ2}, Section 3.1), when one of the following equivalent conditions holds:
\begin{enumerate}
\item $D^{p,q} = O$
\item $d^{p,q} = 0$
\item $f_{\db} := i^{p,q}_{\ddb,\db}$ is injective
\item For all $\omega \in \E^{p,q-1}(M)$ with $\ddb \omega = 0$, there is $\alpha \in \E^{p-1,q-1}(M)$ with $\db \omega = \ddb \alpha.$
\end{enumerate}
\end{defn}

Notice that, if $p=0, q \geq 1$, by $D^{0,q} \simeq C^{0,q-1}$, condition (4) means that  there is no  $\omega \in \E^{0,q-1}(M)$ such that $\ddb \omega = 0, \db \omega \neq 0$.

\begin{defn}
Let $M$ be a complex manifold of dimension $n$, let $0 \leq p,q \leq n$.
We say that $M$ is a   {\bf $\widetilde {(p,q)}-$mild $\ddb-$manifold}, or that on $M$ it holds the {\bf $\widetilde {(p,q)}-$th mild $\ddb-$Lemma} (condition $\E^{p,q}$ in \cite{ZR}, Definition 1.8, or condition $\SS^{p,q}$ in \cite{RZ}, Notation 3.5), when one of the following equivalent conditions holds:
\begin{enumerate}
\item $\tilde B^{p,q} = O$
\item $b^{p,q} = a^{p,q}$
\item $g_{\db} := i^{p,q}_{\db,\de + \db}$ is injective
\item For all $\omega \in \E^{p,q}(M)$ with $\db \omega = 0, \omega = \de u$, there is $v \in \E^{p,q-1}(M)$
such that  $\omega = \db v$.
\end{enumerate}
\end{defn}

Notice that, if $q=0, p \geq 1$, condition (4) means that  there is no  $u \in \E^{p-1,0}(M)$ such that $\ddb u = 0, \de u \neq 0$.

\begin{defn}
Let $M$ be a complex manifold of dimension $n$, let $0 \leq p,q \leq n$.
We say that $M$ is a   {\bf $\widetilde {(p,q)}-$dual mild $\ddb-$manifold}, or that on $M$ it holds the {\bf $\widetilde {(p,q)}-$th dual mild $\ddb-$Lemma}, when one of the following equivalent conditions holds:
\begin{enumerate}
\item $\tilde D^{p,q} = O$
\item $d^{p,q} = a^{p,q}$
\item $g_{\de} := i^{p,q}_{\de,\de + \db}$ is injective
\item For all $\omega \in \E^{p,q}(M)$ with $\de \omega = 0, \omega = \db u$, there is $v \in \E^{p-1,q}(M)$
such that  $\omega = \de v$.
\end{enumerate}
\end{defn}

Notice that, if $p=0, q \geq 1$, condition (4) means that  there is no  $u \in \E^{0,q-1}(M)$ such that $\ddb u = 0, \db u \neq 0$.

\begin{defn}
Let $M$ be a complex manifold of dimension $n$, let $0 \leq p,q \leq n$.
We say that $M$ is a   {\bf $(p,q)-$weak $\ddb-$manifold}, or that on $M$ it holds the {\bf $(p,q)-$th weak $\ddb-$Lemma}, when one of the following equivalent conditions holds:
\begin{enumerate}
\item $A^{p,q} = O$
\item $a^{p,q} = 0$
\item For all $\omega \in \E^{p,q-1}(M)$ with $\db \omega = \de \beta$, there is $\alpha \in \E^{p-1,q-1}(M)$ with $\db \omega = \ddb \alpha.$
\end{enumerate}
\end{defn}

\begin{defn}
Let $M$ be a complex manifold of dimension $n$, let $0 \leq p,q \leq n$.
We say that $M$ is a   {\bf $(p,q)-$strong $\ddb-$manifold}, or that on $M$ it holds the {\bf $(p,q)-$th strong $\ddb-$Lemma}, when one of the following equivalent conditions holds:
\begin{enumerate}
\item $B^{p,q} = O, D^{p,q} = O$
\item $b^{p,q} = 0, d^{p,q} = 0$
\item $i^{p,q}_{\ddb,\de + \db}$ is injective
\item For all $\omega \in \E^{p,q}(M)$ with $\omega = \de a + \db b$, there is $\alpha \in \E^{p-1,q-1}(M)$ with 
$\omega = \ddb \alpha$
\item The $(p,q)-$th mild $\ddb-$Lemma and the $(q,p)-$th mild $\ddb-$Lemma hold
\item The $(p,q)-$th dual mild $\ddb-$Lemma and the $(q,p)-$th dual mild $\ddb-$Lemma hold.
\end{enumerate}
\end{defn}
\bigskip

 {\bf Remark 4.7.1}  Let us illustrate here some history about the subject.
 
 \begin{enumerate}
 
 \item The $(n-1,n)-$th weak $\ddb-$Lemma has been proposed in \cite{FY}, Definition 5, in the form of Definition 4.6 (3), to prove Theorem 6. The authors say that this condition is verified when $H^{2,0}_{\db} =0$: of course, $H^{2,0}_{\db} =0$ produces the vanishing of: $H^{n-2,n}_{\db}$ (Proposition 2.2), $E^{n-2,n}$ (Sequence 2.6 and Lemma 2.3), $B^{n-1,n}$ (Lemma 2.1), $A^{n-1,n}$ (Sequence (2.1)).
 
 \item In \cite{AU1}, Proposition 3.7, the authors consider the condition given in Definition 4.3 (3), for the case $(n-1,n)$, that is, the  $(n-1,n)-$th dual mild $\ddb-$Lemma, and remark  in Proposition 3.9 that this condition is not equivalent to that introduced in \cite{FY}, i.e. the $(n-1,n)-$th weak $\ddb-$Lemma.
 
\item More or less at the same time, in \cite{AU2} Definition 2.1, the same authors introduce  the $(n-1,n)-$th strong $\ddb-$Lemma as in Definition 4.7 (3) and (4), and give some results involving it (see Section 7 here). Moreover,  in Theorem 1.1 they use also the condition giving  the  $\widetilde {(n-1,n)}-$th mild $\ddb-$Lemma.

\item In \cite{PU1} the authors define $sGG-$manifolds and characterize them as  $\widetilde {(n,n-1)}-$mild $\ddb-$manifolds in Lemma 1.2.

\item In \cite{RWZ1} we encounter the $(n-1,n)-$th mild $\ddb-$Lemma (Definition 3.1) and the link under the various weak forms of the $\ddb-$Lemma in the case $(n-1,n)$. 

\item Finally, in \cite{RWZ2} (see also \cite{ZR} and \cite{RZ}) the authors introduce in Definition 3.1 the  $(p,q)-$th mild $\ddb-$Lemma as in Definition 4.2 (4), and then also the $(p,p+1)-$th weak $\ddb-$Lemma, the  $(p,q)-$th dual mild $\ddb-$Lemma and the $(p,q)-$th strong $\ddb-$Lemma.

\item In \cite{ZR}, Definition 1.8, one more definition is given, that is: 

\noindent $M \in \D^{p,q} \iff $ $ \forall \omega^{p,q} \in {Im \de^{p-1,q} \cap Ker \db^{p,q}},$ there is $\chi$ such that $\de \chi =0, \ \omega = \db \chi.$

Obviously, this condition lies between conditions $\tilde b^{p,q} = 0$ and $b^{p,q} = 0$.

\item For a fixed $p$, condition $(H_k)$ in \cite{B} corresponds to condition $\tilde b^{p+k,p-k+1} = 0$, while 
condition $(\tilde H_k)$ in \cite{B} corresponds to condition $b^{p+k,p-k+1} = 0$.

 \end{enumerate}
  \medskip
 
  {\bf Remarks 4.7.2.}  
  \begin{enumerate}
  \item Every manifold $M$ is: a  $(p,q)-$mild $\ddb-$manifold, for $(p,q) \in \{(0,q), (1,0), (n,n)\}$; a   $(p,q)-$dual mild $\ddb-$manifold, for $(p,q) \in \{(p,0), (0,1), (n,n)\}$; and so on, by Lemma 2.3. 
  
  Moreover, the cases $(p,q)=(n-1,n)$ or $(n,n-1)$ enjoy particular properties: f.i., the $(n-1,n)-$th mild $\ddb-$Lemma can be characterized by  $h^{n-1,n}_{\ddb}=h^{n-1,n}_{\db}$, and so on.
  
  \item  The fact that $M$ is a  $(p,q)-$mild $\ddb-$manifold implies that $M$ is a  $(p,q)-$weak $\ddb-$manifold and that $M$ is a  $\widetilde {(p,q)}-$mild $\ddb-$manifold, by Lemma 2.4.
  
  \item The fact that $M$ is a  $(p,q)-$mild $\ddb-$manifold implies that the maps 
  $$i^{p-1,q}_{\de,\de + \db}, \ i^{p-1,q}_{\ddb,\db},\  i^{q,p-1}_{\db,\de + \db}, \  i^{q-1,p}_{\ddb,\de}, \  i^{n-p,n-q}_{\de,\de + \db}, \  i^{n-p,n-q}_{\ddb,\db},\  i^{n-q,n-p}_{\db,\de + \db}, \  i^{n-q,n-p}_{\ddb,\de}$$
   are surjective and that the maps 
   $$i^{q,p}_{\ddb,\db}, \ i^{q,p}_{\de,\de + \db}, \  i^{n-p+1,n-q}_{\ddb,\de}, \  i^{n-p+1,n-q}_{\db,\de + \db}, \  i^{n-q,n-p+1}_{\ddb,\db}, \  i^{n-q,n-p+1}_{\de,\de + \db}$$
  are injective, by Lemmas 2.2 and 2.4. Similar remarks hold for the other classes.
  
  \item Notice that, by Lemma 2.4 (2), to get conditions of type (2) in the previous definitions, we only need the vanishing of a cohomology group of type
  $H^{p,q}_{\ddb}$ or $H^{p,q}_{\de + \db}$. 
  \end{enumerate}
  
\bigskip

\section{Generalized \pkk manifolds}

We introduced  $p-$K\"ahler manifolds in \cite{AA} and then in \cite{AB2}, and studied them mainly in the compact case:
$p-$K\"ahler manifolds enclose K\"ahler and balanced manifolds, and seem to be a nice generalization of the K\"ahler setting. Later on, also pluriclosed (SKT) manifolds have been proposed as a good generalization of K\"ahler manifolds, and many others.

Thus a deep investigation of this type of structures (no more metrics, in general) was needed: we proposed in \cite{A1} a general setting, those of 
{\it generalized $p-$K\"ahler manifolds}, which enclose all the known classes of non-K\"ahler manifolds that can be characterized by a strictly weakly positive 
\lq\lq closed\rq\rq form (see f. i.  \cite{A3}). In the last years, some of them have been studied (not with the same name!) by other authors: hence we give in Remark 5.1.1  a sort of  dictionary; moreover, a brief survey of the whole history can be seen in \cite{A3}.
\medskip

 \begin{defn} Let $X$ be a complex manifold of dimension $n \geq 2$, let $p$ be an integer, $1 \leq p \leq n-1$.
 
\begin{enumerate}
\item $X$ is a {\it $p-$K\"ahler (pK) manifold} if it has a closed transverse (i.e. strictly weakly positive) $(p,p)-$form $\Omega \in \E^{p,p}(X)_{\R}$. 

\item $X$ is a {\it weakly $p-$K\"ahler (pWK) manifold} if it has a transverse $(p,p)-$form $\Omega$ with $\de \Omega = \ddb \alpha$ for some form $\alpha$.

\item $X$ is a {\it $p-$symplectic (pS) manifold} if it has a closed transverse  real $2p-$form $\Psi \in \E^{2p}(X)$; that is, $d \Psi = 0$ and $\Omega := \Psi^{p,p}$ (the  $(p,p)-$component of $\Psi$) is transverse.

\item  $X$ is a {\it $p-$pluriclosed  (pPL) manifold} if it has a transverse $(p,p)-$form $\Omega$ with $\ddb \Omega = 0.$
\end{enumerate}

\end{defn}

Notice that:
$pK \Longrightarrow pWK  \Longrightarrow pS  \Longrightarrow pPL.$ 
When $X$ satisfies one of these definitions, it is called a {\bf generalized $p-$K\"ahler manifold}. The form $\Omega$, called a generalized  \pkk form, is said to be \lq\lq closed\rq\rq; moreover, $\Omega > 0$ means that $\Omega$ is transverse. 

\medskip

{\bf 5.1.1 Remark.} $1PL$ corresponds to pluriclosed (\cite{E}) or SKT (\cite{FT}); $1S$ to hermitian symplectic (\cite{ST}), $1K$ to K\"ahler.
Moreover, $(n-1)PL$ manifolds (or metrics) are called standard or Gauduchon; $(n-1)S$ corresponds to strongly Gauduchon (\cite{Po1}, \cite{X}), $(n-1)WK$ manifolds are called superstrong Gauduchon (\cite{PU1}), $(n-1)K$ corresponds to balanced (\cite{Mi}). Last but not least, similar names to $pPL$ and $pS$ manifolds are given in \cite{B}, where the author present them as a new generalization of the old paper \cite{AA} \dots
\medskip

{\bf 5.1.2 Remark.}  In \cite{A1} we noticed that, on a $\ddb-$manifold, for every $p, 1 \leq p \leq n-1$, it is the same to have a pPL structure, or a pS structure, or a pWK structure. Thus, in particular, every compact manifold is strongly Gauduchon (i.e. $(n-1)$S) and superstrong Gauduchon (i.e. $(n-1)$WK).
This was our motivation to investigate weak forms of $\ddb-$Lemma.
\medskip

{\bf 5.1.3 Remark.}  Generalized $p-$K\"ahler manifolds can be studied also using positive currents: see the Main Theorem 3.1 in \cite{A3}.

\bigskip

 \section{Cones}
 
 Let $M$ be a compact complex manifold. To study particular hermitian metrics on $M$, it is often useful to consider cones in suitable cohomology spaces of $M$. For instance, the \KK cone $\K$ of a compact (K\"ahler) manifold is, by definition, the set of cohomology classes of  $(1,1)-$forms associated with \KK metrics. Notice that $\K$ can be considered as an open convex cone in $H_{d} ^{1,1}(M, \R)$ or in $H_{\ddb} ^{1,1}(M, \R)$.

We will study cones of transverse forms (denoted by $\Omega > 0$) in $H_{\ddb} ^{p,p}(M, \R)$ and in $H_{\de + \db} ^{p,p}(M, \R)$, starting from the (Bott-Chern) \KK  cone, the balanced cone and the Gauduchon cone (see \cite{Po2}, \cite{RZ}, \cite{AU2}, \cite{CRS} and others).
 
 \medskip
 \begin{defn} Let $M$ be a compact complex manifold of dimension $n$. 
 
 The {\bf \KK cone} of $M$ is 
 $$\K_M := \{ [\omega] \in H_{\ddb} ^{1,1}(M, \R) / \omega > 0 \};$$
 the {\bf balanced cone} of $M$ is
 $$\B_M := \{ [\Omega] \in H_{\ddb} ^{n-1,n-1}(M, \R) / \Omega > 0 \};$$
 the {\bf Gauduchon cone} of $M$ is
 $$\G_M := \{ [\Omega] \in H_{\de + \db} ^{n-1,n-1}(M, \R) / \Omega > 0 \}.$$ 
 \end{defn}

{\bf Remark 6.1.1} All the previous cones are open convex cones. We have: $\G_M \neq \emptyset$ by the result of Gauduchon (\cite{Ga}), $\K_M \neq \emptyset$
if and only if $M$ is K\"ahler, $\B_M \neq \emptyset$ if and only if $M$ is balanced.
\medskip

We say that a cone {\bf degenerates} if it encloses the whole space, i.e. when every cohomology class contains a transverse form. Since the cones are open, they degenerate if and only if the null class contains a transverse form. This is impossible for $\K_M$, because we would get a smooth function $f$ such that $i \ddb f > 0$ on a compact manifold.

On the other hand, the cones $\B_M$ and $\G_M$ may degenerate, as we can see through the following example.
\medskip

{\bf Example 6.1.2} (see \cite{Y} or \cite{A3}) Take $G=SL(2,\C)$, $\Gamma = SL(2,\Z)$, and  consider the holomorphic $1-$forms $\eta, \alpha, \beta$ on $M := G/ \Gamma$ induced by the standard basis for $\g^*$: it holds
$$ d \alpha = -2 \eta \wedge \alpha, \ \ d \beta = 2 \eta \wedge \beta, \ \ d \eta = \alpha \wedge \beta.$$
The standard fundamental  form, given by $\omega = \frac{i}{2} (\alpha \wedge \overline \alpha + \beta \wedge \overline \beta +\eta \wedge \overline \eta )$, satisfies $d \omega^2=0$, so that $\omega^2$ is a balanced form: but it is exact, since
$$\omega^2 = d(\frac{1}{16} \alpha \wedge d \overline  \alpha + \frac{1}{16} \beta \wedge d \overline  \beta +\frac{1}{4} \eta \wedge d \overline \eta ).$$
\medskip

We proved on this subject a nice characterization result: see  \cite{A3},  Theorem 5.1 and the remarks after the theorem, in particular Remark 5.2.4. In our setting, we have the following results (Theorem 5.1(4) in \cite{A3} gives $(i)$, while Theorem 5.1(1)  gives $(ii)$): 

\begin{prop}

Let $M$ be a {compact} complex manifold of dimension $n \geq 2$. 

(i) The following statements are equivalent:

\begin{enumerate}
\item $\G_M$ degenerates (i.e. $\G_M \simeq H_{\de +\db}^{n-1,n-1}(M, \R)$).

\item $[0] \in \G_M$. 

\item There is a transverse  $(n-1,n-1)-$form $\Omega$ on $M$ which is the component of a boundary (i.e. there is a form $\beta$ such that $\Omega = \de \overline \beta + \db \beta$). 

\item There are no non trivial closed currents $T \in {\E}_{n-1,n-1}'(M)_{\R}$, $T \geq 0$ (in particular, $M$ does not belong to class $\CC$ of Fujiki, where there are \KK currents).

\end{enumerate}

(ii) The following statements are equivalent (see Lemma 4.18 in \cite{RZ}):

\begin{enumerate}
\item $\B_M$ degenerates (i.e. $\B_M \simeq H_{\ddb}^{n-1,n-1}(M, \R)$).

\item $[0] \in \B_M$. 

\item There is a transverse  $(n-1,n-1)-$form $\Omega$ on $M$ which is $\ddb-$exact.

\item There are no non trivial $\ddb-$closed currents $T \in {\E}_{n-1,n-1}'(M)_{\R}$, $T \geq 0$ (in particular, $M$ does not belong to class $\CC$ of Fujiki, where there are \KK currents).

\end{enumerate}
\end{prop}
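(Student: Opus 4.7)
The plan is to handle (i) and (ii) in parallel, since they share an identical structure with $(H^{n-1,n-1}_{\de+\db}, d)$ replaced by $(H^{n-1,n-1}_{\ddb}, \ddb)$ and \lq\lq closed current\rq\rq\ replaced by \lq\lq $\ddb-$closed current\rq\rq. In each case I would close the loop $(1)\Leftrightarrow(2)\Leftrightarrow(3)\Leftrightarrow(4)$.

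For $(1)\Leftrightarrow(2)$: $\G_M$ (resp. $\B_M$) is an open convex cone in a finite dimensional real vector space, so degeneration is equivalent to containing $[0]$. The non-trivial direction uses that, given a transverse representative $\Omega_0$ of $[0]$ and an arbitrary class $[\alpha]$ with representative $\alpha$, the form $\alpha+t\Omega_0$ is transverse for $t$ large (transversality is an open, positively scaling invariant condition) while its cohomology class is still $[\alpha]$; thus every class has a transverse representative.

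For $(2)\Leftrightarrow(3)$: this is the unfolding of the definition of vanishing in $H^{n-1,n-1}_{\de+\db}$ (resp.\ $H^{n-1,n-1}_{\ddb}$); one only needs to observe that a real element of $\mathrm{Im}\,\de+\mathrm{Im}\,\db$ can be symmetrized to the form $\de\overline\beta+\db\beta$ (resp.\ that a real element of $\mathrm{Im}\,\ddb$ can be written as $i\ddb\gamma$ with $\gamma$ real).

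For $(3)\Leftrightarrow(4)$: this is the core duality step. The easy direction is a pairing argument: if $\Omega=\de\overline\beta+\db\beta>0$ (resp.\ $\Omega=i\ddb\gamma>0$) and $T$ is a non-trivial non-negative closed (resp.\ $\ddb-$closed) current of bidimension $(n-1,n-1)$, then $\langle T,\Omega\rangle>0$ by strict positivity, while $\langle T,\Omega\rangle=\pm\langle dT,\cdot\rangle=0$ (resp.\ $\pm\langle\ddb T,\cdot\rangle=0$), a contradiction. The converse direction is the Hahn--Banach / Sullivan--Harvey--Lawson argument: if (3) fails, the open convex cone of transverse $(n-1,n-1)-$forms is disjoint from the subspace $\mathrm{Im}\,\de+\mathrm{Im}\,\db$ (resp.\ $\mathrm{Im}\,\ddb$) of $\mathcal{E}^{n-1,n-1}(M)_{\R}$; a separating continuous functional produces a non-zero current $T\geq 0$ vanishing on that subspace, i.e.\ $dT=0$ (resp.\ $\ddb T=0$), contradicting (4). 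The last remark in each statement, that $M\notin\CC$, then follows because any \KK current in class $\CC$ would be a non-trivial positive $d-$closed (hence also $\ddb-$closed) current of bidimension $(n-1,n-1)$.

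The step I expect to require most care is $(3)\Rightarrow(4)^{\mathrm{c}}\Rightarrow(3)^{\mathrm{c}}$: for Hahn--Banach to deliver a \emph{closed} (resp.\ $\ddb-$closed) separating current rather than merely a continuous functional, I need the image subspace $\mathrm{Im}\,\de+\mathrm{Im}\,\db$ (resp.\ $\mathrm{Im}\,\ddb$) to be closed in the Fr\'echet topology of $\mathcal{E}^{n-1,n-1}(M)_{\R}$. On a compact manifold this is standard: it follows from ellipticity of the Laplacians associated with $d$, $\de+\db$, and $\ddb$, which also guarantees the finite dimensionality of the cohomology groups used throughout Section 2.
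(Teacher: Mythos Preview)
Your argument is correct and follows the standard Harvey--Lawson/Sullivan duality scheme. The paper itself does not give a self-contained proof of this proposition: it simply invokes Theorem~5.1 of \cite{A3} for both parts (and Lemma~4.18 of \cite{RZ} for~(ii)), where the equivalence is proved precisely along the lines you sketch---the open convex cone of transverse $(n-1,n-1)$-forms is separated from the closed subspace $\mathrm{Im}\,\de+\mathrm{Im}\,\db$ (resp.\ $\mathrm{Im}\,\ddb$) by Hahn--Banach, yielding a nontrivial positive $d$-closed (resp.\ $\ddb$-closed) current. Your remarks on the equivalence $(1)\Leftrightarrow(2)$ via openness of the cone, and on the need for the image subspaces to be Fr\'echet-closed (guaranteed by ellipticity on a compact $M$), are exactly the technical ingredients behind the cited result, so there is no substantive difference between your route and the one the paper defers to.
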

\medskip

Before to go to the case of a generic index $p, \ 1 \leq p \leq n-1$, we complete the case $p=n-1$, which is the most popular. 

In \cite{Po2}, Popovici studied $sG-$manifolds (in our setting, $(n-1)S$ manifolds) by means of the strongly Gauduchon cone, which is defined as follows. 
\medskip

Let $T^{n-1,n-1} : H_{\de + \db}^{n-1,n-1}(M, \C) \to H_{\db}^{n,n-1}(M, \C)$ given by $T([\Omega]_{\de + \db}) = [\de \Omega]_{\db}$. 

This map is well defined, and can be detected (matching exact sequences (2.3) and (2.1)) as a composition of maps, using the isomorphism $E^{n-1,n-1} \simeq B^{n,n-1}$ (notice also that $C^{n,n-1}$ = 0):
\medskip

$  \to D^{n-1,n-1}  \to H^{n-1,n-1}_{\de}  \stackrel{g_{\de}}{\to}  H^{n-1,n-1}_{\de + \db}  \to E^{n-1,n-1} \to 0$

$\hspace{4.55 cm} 0 \to A^{n,n-1}  \to B^{n,n-1}  \to H^{n,n-1}_{\db}  \stackrel{g_{\db}}{\to}  H^{n,n-1}_{\de + \db}  \to 0$
\medskip

\begin{defn} (see \cite{Po2}) The strongly Gauduchon cone of $M$ (which may be empty) is 
$$\S\G_M:= \G_M \cap Ker T^{n-1,n-1} \subset H_{\de + \db}^{n-1,n-1}(M, \C).$$
\end{defn}
\medskip

To complete the picture, let us consider also the map
$$W^{n-1,n-1} : H_{\de + \db}^{n-1,n-1}(M, \C) \to H_{\ddb}^{n,n-1}(M, \C)$$
 given by $W([\Omega]_{\de + \db}) = [\de \Omega]_{\ddb}$. This map is well defined, 
since $d(\de \Omega)=0, \ W(\de u + \db v) = [\ddb v]_{\ddb} = 0$.

As above, we detect $W$ (matching exact sequences (2.7) and (2.8)) as a composition of maps, using the isomorphism $E^{n-1,n-1} \simeq B^{n,n-1}$ (notice also that $\tilde C^{n,n-1}$ = 0):
\medskip

$ 0  \to \tilde D^{n-1,n-1}  \to H^{n-1,n-1}_{\de}  \stackrel{g_{\de}}{\to}  H^{n-1,n-1}_{\de + \db}  \to E^{n-1,n-1} \to 0$

$\hspace{6.7 cm} 0  \to B^{n,n-1}  \to H^{n,n-1}_{\ddb}  \stackrel{f_{\de}}{\to}  H^{n,n-1}_{\de}  \to 0$
\medskip

\begin{defn}  The weakly Gauduchon cone of $M$ (which may be empty) is 
$$\W\G_M:= \G_M \cap Ker W^{n-1,n-1} \subset H_{\de + \db}^{n-1,n-1}(M, \C).$$
\end{defn}
\medskip

{\bf Remark 6.4.1}
We have $\W\G_M \subseteq \S\G_M$, since $T^{n-1,n-1} = f_{\db}^{n,n-1} \circ  W^{n-1,n-1}$.

\medskip

We got the following situation: $\W\G_M \subseteq \S\G_M \subseteq \G_M$; in the next Proposition we give conditions to assure the equality of the cones. The proof is a particular case ($p = n-1$) of that of the forthcoming Theorem 6.7.

\medskip

\begin{prop} Let $M$ be a compact complex manifold. In the above notation:
\begin{enumerate}
\item $\G_M = \S\G_M$ (i.e. $M$ is a $sGG-$manifold, see \cite{Po2}, \cite{PU1}) $\iff T^{n-1,n-1} = 0 \iff \tilde B^{n,n-1} =0.$
\item (suppose the cones are not empty) $\W\G_M = \S\G_M \iff $

$Ker T^{n-1,n-1} = Ker W^{n-1,n-1} \ \iff A^{n,n-1} =0.$
\item $\G_M = \W\G_M \iff W^{n-1,n-1} = 0 \ \iff  B^{n,n-1} =0.$
\item The (equivalent) conditions in Proposition 6.2(i) imply that $M$ is $(n-1)$WK and $(n-1)$S.
\end{enumerate}
\end{prop}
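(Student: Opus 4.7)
\noindent The plan is to handle (1), (2), (3) uniformly and (4) separately. For the first three parts I will identify, once and for all, the image and relative kernel of $T=T^{n-1,n-1}$ and $W=W^{n-1,n-1}$ in algebraic terms, then read each cone equality as a vanishing statement on the ambient cohomology via the openness of the Gauduchon cone.

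\emph{Algebraic step.} Following the factorisation diagrams displayed just before Definitions 6.3 and 6.4, both $T$ and $W$ begin with the common surjection $H^{n-1,n-1}_{\de+\db}\twoheadrightarrow E^{n-1,n-1}$ from (2.7), followed by the isomorphism $\de\colon E^{n-1,n-1}\xrightarrow{\sim}B^{n,n-1}$ of Lemma 2.1(2). Then $W$ continues via the injection $B^{n,n-1}\hookrightarrow H^{n,n-1}_{\ddb}$ from (2.8), so $\operatorname{Im}W\simeq B^{n,n-1}$; and since $T=f_{\db}^{n,n-1}\circ W$ and the restriction of $f_{\db}^{n,n-1}$ to $B^{n,n-1}$ has kernel $A^{n,n-1}$ by (2.1), we get $\operatorname{Im}T\simeq \tilde B^{n,n-1}$ and $\ker T/\ker W\simeq A^{n,n-1}$. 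The last isomorphism uses that any class in $B^{n,n-1}\cap D^{n,n-1}\subseteq H^{n,n-1}_{\ddb}$ admits, via a $\ddb$-exact correction, a representative in $\operatorname{Im}\de\cap\operatorname{Im}\db$, which is the defining condition for $A^{n,n-1}$.

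\emph{Openness and cone-equality conversions.} Since $\G_M$ is a nonempty (by Gauduchon's theorem) open convex cone in $V_{\R}:=H^{n-1,n-1}_{\de+\db}(M,\R)$, differences of its elements span $V_{\R}$ over $\R$ and hence the complex space over $\C$, so any linear map vanishing on $\G_M$ vanishes identically. Applied to $T$ this gives (1): $\G_M=\S\G_M\iff T\equiv 0\iff \tilde B^{n,n-1}=0$. Applied to $W$ this gives (3): $\G_M=\W\G_M\iff W\equiv 0\iff B^{n,n-1}=0$. For (2), the implication $A^{n,n-1}=0\Rightarrow\ker T=\ker W\Rightarrow\S\G_M=\W\G_M$ is immediate. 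For the converse, fix $[\omega_0]\in\S\G_M=\W\G_M$ (nonempty by hypothesis) and, for any real $[\Omega]\in\ker T$ and small $t\in\R$, observe that $[\omega_0+t\Omega]\in\G_M\cap\ker T=\S\G_M=\W\G_M$; subtracting $W([\omega_0])=0$ from $W([\omega_0+t\Omega])=0$ yields $W([\Omega])=0$. Together with the algebraic identification $\ker T/\ker W\simeq A^{n,n-1}$ this forces $A^{n,n-1}=0$.

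\emph{Part (4).} Under the equivalent conditions of Prop 6.2(i) there is a transverse form $\Omega = \de\overline\beta + \db\beta\in\E^{n-1,n-1}(M,\R)$ with $\beta\in\E^{n-1,n-2}(M)$. Then $\de\Omega = \ddb\beta$, so $\Omega$ is an $(n-1)$WK form. Setting $\Psi:=d(\beta+\overline\beta)\in\E^{2n-2}(M,\R)$, this form is manifestly real and $d$-closed, and a bidegree check gives $\Psi^{n-1,n-1}=\db\beta+\de\overline\beta=\Omega$, so $\Omega$ is $(n-1)$S.

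\emph{Main obstacle.} The delicate point is the converse direction of (2): the openness argument naturally produces only the equality of the \emph{real} kernels $V_{\R}\cap\ker T=V_{\R}\cap\ker W$, and one must leverage the algebraic identification $\ker T/\ker W\simeq A^{n,n-1}$ (a complex vector space) and the real structure on $V$ to conclude $A^{n,n-1}=0$. The bookkeeping developed in Section 2 is exactly tailored to this reduction; the steps for (1) and (3) are much cleaner because vanishing of a linear map is insensitive to the real/complex distinction.
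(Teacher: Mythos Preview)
Your argument follows the same route as the paper, which treats Proposition 6.5 as the special case $p=n-1$ of Theorem 6.7: the openness of the Gauduchon cone converts the cone equalities into vanishing statements for the linear maps $T$ and $W$, and the equivalences with $\tilde B^{n,n-1}=0$, $B^{n,n-1}=0$, $A^{n,n-1}=0$ are then read off from the factorisations through the exact sequences. Your explicit identifications $\operatorname{Im}W\simeq B^{n,n-1}$, $\operatorname{Im}T\simeq\tilde B^{n,n-1}$, $\ker T/\ker W\simeq A^{n,n-1}$ are more detailed than the paper's one-line unwinding at the level of forms, but not different in substance.

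Two small points of contrast. For (4) the paper argues abstractly (Theorem 6.7(4)) that $[0]\in\G_M$ trivially lies in $\ker W$ and $\ker T$, so $\W\G_M$ and $\S\G_M$ are nonempty and $M$ is $(n-1)$WK and $(n-1)$S; your direct construction of the witnesses from $\Omega=\de\overline\beta+\db\beta$ via $\de\Omega=\ddb\beta$ and $\Psi=d(\beta+\overline\beta)$ is a pleasant and more explicit alternative. For (2) you rightly isolate the real-versus-complex issue in passing from $\S\G_M=\W\G_M$ to $\ker T=\ker W$: the openness trick only gives equality of the real traces, while $\ker T$ and $\ker W$ need not be conjugation-stable. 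The paper's proof of Theorem 6.7(2) addresses only the equivalence $\ker T\subseteq\ker W\iff A^{p+1,p}=0$ and leaves the cone-to-kernel step implicit, so on this point you have matched---and, by naming the obstacle, arguably exceeded---the paper's own level of care.
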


{\bf Remark 6.5.1} As for $sGG-$manifolds, one can see \cite{PU1} and  \cite{PU2}. In particular, in \cite{PU2}, Theorem 1.4, the authors characterize $sGG-$manifolds by the condition
$h^{0,1}_{\ddb}=h^{0,1}_{\db}$. As a matter of fact, since by Lemma 2.3(2) we have: $c^{n,n-1} =0$, the condition $\tilde b^{n,n-1} = 0$  in Proposition 6.5(1) is equivalent by Lemma 2.4(3) to  condition $h_{\de + \db}^{n,n-1} = h_{\db}^{n,n-1}$, or (by Proposition 2.2) to $h_{\ddb}^{0,1} = h_{\db}^{0,1}$.

\bigskip
For the case of a generic $p, \ 1 \leq p \leq n-1$, we define the opportune maps and the cones as follows.

\begin{defn}  Let $M$ be a compact complex manifold of dimension $n$, let $1 \leq p \leq n-1$.
\begin{enumerate}
\item The {\bf $p-$\KK cone} of $M$ is 
 $$p\K_M := \{ [\omega] \in H_{\ddb} ^{p,p}(M, \R) / \omega > 0 \};$$
 \item the {\bf $p\PP_M$ cone} of $M$ is
 $$p\PP_M := \{ [\Omega]^{p,p}_{\de + \db} \ / \Omega > 0 \} \subseteq H_{\de + \db}^{p,p}(M, \R);$$
 \item the {\bf $p\tilde \S_M$ cone} of $M$ is 
 $p\PP_M \cap Ker T^{p,p}$, where 
 
 $T^{p,p} : H_{\de + \db}^{p,p}(M, \C) \to H_{\db}^{p+1,p}(M, \C)$ is given by $T([\Omega]_{\de + \db}) = [\de \Omega]_{\db}$
\smallskip

 \item the {\bf $p\W_M$ cone} of $M$ is 
 $p\PP_M \cap Ker W^{p,p}$, where 
 
 $W^{p,p} : H_{\de + \db}^{p,p}(M, \C) \to H_{\ddb}^{p+1,p}(M, \C)$ is given by $T([\Omega]_{\de + \db}) = [\de \Omega]_{\ddb}$

\end{enumerate}
\end{defn}
\medskip

{\bf Remarks 6.6.1} \begin{enumerate}
\item Notice that only the $p-$\KK cone lies in $H_{\ddb} ^{p,p}(M, \R)$, while the other cones are contained in $H_{\de + \db}^{p,p}(M, \R).$
\item All cones may be empty, except for $(n-1)\PP_M = \G_M$. As a matter of fact, $(n-1)\PP_M = \G_M$, $(n-1)\tilde \S_M = \S\G_M$, $(n-1)\W_M = \W\G_M$.
\item Like $\G_M$, all the $p\PP_M$ are open convex cones.
\item The cone $1\tilde \S_M$ is studied in \cite{Ma}.
\end{enumerate}
\medskip

So we have the following general result:

\begin{thm} Let $M$ be a compact complex manifold of dimension $n$, let $1 \leq p \leq n-1$.
 In the above notation (suppose also that the cones are not empty):
\begin{enumerate}
\item $p\PP_M = p\tilde \S_M \ \iff T^{p,p} = 0 \iff \tilde B^{p+1,p} =O \iff M$ is a $\widetilde {(p+1,p)}-$mild $\ddb-$manifold.
\item $p\tilde \S_M = p\W_M \iff Ker T^{p,p} = Ker W^{p,p} \ \iff A^{p+1,p} =O \iff M$ is a ${(p+1,p)}-$weak $\ddb-$manifold.
\item $p\PP_M = p\W_M \iff W^{p,p} = 0 \ \iff  B^{p+1,p} =O \iff M$ is a ${(p+1,p)}-$mild $\ddb-$manifold.
\item When the open cone $p\PP_M$ degenerates (i.e. $p\PP_M \simeq H_{\de + \db}^{p,p}(M, \R)$), or, equivalently, when  $[0] \in p\PP_M$, then $M$ is not only pPL, but also pWK and pS.
\end{enumerate}
\end{thm}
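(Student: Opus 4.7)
\medskip

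\textbf{Proof sketch.} The backbone is a factorization of $T^{p,p}$ and $W^{p,p}$ through the exact sequences of Section~2. Using the surjection $H^{p,p}_{\de+\db}\twoheadrightarrow E^{p,p}$ from (2.3)/(2.7) and the isomorphism $E^{p,p}\xrightarrow{\de,\,\simeq} B^{p+1,p}$ of Lemma~2.1(2), both maps admit the factorization
\[
H^{p,p}_{\de+\db}\;\twoheadrightarrow\;E^{p,p}\;\xrightarrow{\;\simeq\;}\;B^{p+1,p}\;\longrightarrow\;H^{p+1,p}_{*},
\]
with $*=\db$ for $T^{p,p}$, the last arrow being $B^{p+1,p}\twoheadrightarrow\tilde B^{p+1,p}\hookrightarrow H^{p+1,p}_{\db}$ read off from (2.5); and $*=\ddb$ for $W^{p,p}$, the last arrow being the injection $B^{p+1,p}\hookrightarrow H^{p+1,p}_{\ddb}$ read off from (2.4). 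Consequently $Im\,T^{p,p}\simeq\tilde B^{p+1,p}$, $Im\,W^{p,p}\simeq B^{p+1,p}$, and $Ker\,W^{p,p}\subseteq Ker\,T^{p,p}$ with $Ker\,T^{p,p}/Ker\,W^{p,p}\simeq A^{p+1,p}$ (the kernel $A^{p+1,p}\hookrightarrow B^{p+1,p}$ being the one appearing in (2.1)).

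\medskip

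The bridge from cone equalities to these linear-algebra conditions is openness. Each of $p\PP_M,\,p\tilde\S_M,\,p\W_M$ is an open convex cone in its natural ambient real subspace (transversality is a $C^0$-open condition), and a $\C$-linear map vanishing on a non-empty open subset of a real form of its domain vanishes identically, since $H^{p,p}_{\de+\db}(M,\C)=H^{p,p}_{\de+\db}(M,\R)\otimes_\R\C$ via conjugation. Under the non-emptiness hypothesis this gives $p\PP_M=p\tilde\S_M\iff T^{p,p}=0$ and $p\PP_M=p\W_M\iff W^{p,p}=0$. For case (2), $p\W_M\subseteq p\tilde\S_M$ always (because $T^{p,p}=f_{\db}^{p+1,p}\circ W^{p,p}$, as in Remark~6.4.1), so $p\tilde\S_M=p\W_M$ forces $W^{p,p}$ to vanish on the non-empty open subset $p\tilde\S_M$ of the real subspace $Ker\,T^{p,p}\cap H^{p,p}_{\de+\db}(M,\R)$; the same openness/complexification argument then yields $Ker\,T^{p,p}\subseteq Ker\,W^{p,p}$, which with the reverse inclusion gives equality of kernels.

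\medskip

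Coupled with the first paragraph, the remaining equivalences are immediate: $T^{p,p}=0\iff\tilde B^{p+1,p}=0$, $W^{p,p}=0\iff B^{p+1,p}=0$, and $Ker\,T^{p,p}=Ker\,W^{p,p}\iff A^{p+1,p}=0$; these are respectively the $\widetilde{(p+1,p)}$-mild, $(p+1,p)$-mild and $(p+1,p)$-weak $\ddb$-manifold conditions (Definitions 4.4, 4.2, 4.6), completing (1)--(3). For (4), if $[0]\in p\PP_M$, pick a transverse real $\Omega$ with $\Omega=\de\overline\alpha+\db\alpha$ for some $\alpha\in\E^{p,p-1}(M)$. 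Then $\ddb\Omega=0$ (hence $pPL$); $\de\Omega=\de\db\alpha=\ddb\alpha$ (hence $pWK$ with form $\alpha$); and $\Psi:=d(\alpha+\overline\alpha)$ is a real closed (actually exact) $2p$-form whose $(p,p)$-component is $\db\alpha+\de\overline\alpha=\Omega$ (hence $pS$).

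\medskip

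\textbf{Expected obstacle.} The technical crux is the factorization of $T^{p,p},W^{p,p}$ in the first paragraph, precisely identifying their images and kernels with $A^{p+1,p},B^{p+1,p},\tilde B^{p+1,p}$ via the chains (2.3)/(2.7), (2.5), (2.4) and Lemma~2.1(2). The openness-plus-complexification step in case (2) also needs attention, since there the ambient space is itself a linear subspace rather than the full cohomology.
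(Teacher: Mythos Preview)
Your proof is correct and follows essentially the same route as the paper: openness of the cones reduces the cone equalities to the linear conditions $T^{p,p}=0$, $W^{p,p}=0$, $Ker\,T^{p,p}=Ker\,W^{p,p}$, which are then identified with the vanishing of $\tilde B^{p+1,p}$, $B^{p+1,p}$, $A^{p+1,p}$. The paper does the latter identification by unpacking definitions case by case (e.g.\ for (1) writing $\tilde B^{p+1,p}=0\iff Im\,\de\cap Ker\,\db=Im\,\de\cap Im\,\db\iff T^{p,p}=0$, and for (2) reading $Ker\,T^{p,p}\subseteq Ker\,W^{p,p}$ directly as the defining condition of $A^{p+1,p}=0$), whereas you systematize it through the single factorization $H^{p,p}_{\de+\db}\twoheadrightarrow E^{p,p}\simeq B^{p+1,p}$ and then read off images and kernels from sequences (2.1), (2.4), (2.5). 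For (4) the paper simply observes that $[0]$ lies in $Ker\,W^{p,p}$ and $Ker\,T^{p,p}$, so $p\W_M$ and $p\tilde\S_M$ are nonempty, hence $M$ is $pWK$ (and a fortiori $pS$); your explicit construction of the $pWK$ and $pS$ forms from $\Omega=\de\overline\alpha+\db\alpha$ is a concrete alternative that amounts to the same thing.
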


\begin{proof}
\begin{enumerate}
\item $p\PP_M$ is a non empty open convex cone in $H_{\de + \db}^{p,p}(M, \R)$, while $Ker T^{p,p}$ is a linear subspace there. Thus (see also Observation 5.2 in \cite{Po2}) it holds $p\PP_M = p\tilde \S_M \iff Ker T^{p,p} = H_{\de + \db}^{p,p}(M, \R).$ Moreover, we have:
$$\tilde B^{p+1,p} =O \iff B^{p+1,p} = A^{p+1,p} \iff Im \de \cap Ker \db = Im \de \cap Im \db \iff T^{p,p}=0.$$
\item $Ker T^{p,p} \subseteq Ker W^{p,p}$ says that for every $\Omega = \Omega^{p,p}$ with $\ddb \Omega =0$, when $[\de \Omega]_{\db} =0$ (i.e. $\de \Omega = \db \chi$), then  $[\de \Omega]_{\ddb} =0$ (i.e. $\de \Omega = \ddb \alpha$); this is precisely the condition  $A^{p+1,p} =O.$
\item The proof goes as for (1).
\item If $[0] \in p\PP_M$, then both $p\W_M:= p\PP_M \cap Ker W^{p,p}$ and $p\tilde \S_M:= p\PP_M \cap Ker T^{p,p}$ cannot be empty. (Compare with Theorem 5.1 in \cite{A3}, where condition 5.1(4) corresponds to our $[0] \in p\PP_M$, while condition 5.1(3) implies 5.1(4) and assures that $M$ is \pkk, i.e. $p\K_M \neq \emptyset$).
\end{enumerate}
\end{proof}

{\bf Remark 6.7.1} Except for the case $p=n-1$, $p\tilde \S_M$ is not the cone of  classes of $p$S forms. For instance, when $p=1$, 
$[\omega] = [\omega^{1,1}] \in 1\tilde \S_M$
means: $\omega > 0, \ddb \omega = 0, \de \omega = \db \eta^{2,0}$, while \lq $\omega$ is a 1S-form\rq \ means that 
 $\omega > 0,  \de \omega = \db \eta^{2,0}, \de \eta^{2,0} = 0,$ i.e. the form $\psi = - \eta^{2,0} + \omega - \overline \eta^{2,0}$ is closed (compare \cite{Ma}).
 
 Denote by  $p \S_M$ the open convex cone of classes of $p$S forms: we have $$p \W_M \subseteq p \S_M \subseteq p\tilde \S_M  \subseteq p \PP_M.$$ 

Hence the condition  $\tilde b^{p+1,p} =0$ does not imply that every $p$PL form is a $p$S form: we need also the vanishing else of 
$A^{p+1,p}$ or of $B^{p+1,p}$. But of course  condition  $b^{p+1,p} =0$  implies in particular that  $p \PP_M = p \S_M$.

Referring to \cite{ZR}, Definition 1.8 (see Remark 4.7.1(7)), one can easily check that when $M \in \D^{p+1,p}$, then $p \PP_M = p \S_M$, because when $\Omega$ is a $pPL-$form, so that $\ddb \Omega=0$, then we get a form $\chi$ such that $\de \Omega = \db \chi$ and $\de \chi =0$, hence $\Psi := \Omega - \chi - \overline \chi$ is a $pS-$form.

Moreover, in \cite{B}, Section 5, the author defines the cones $\A_p(M)$, which corresponds to $p \PP_M$, and $\CC_p(M)$, that is $p \S_M$. Some results about these cones (see Proposition 5.4 ibidem) are connected with our Theorem 6.7; in particular in Proposition 5.4(i) a condition is given (in terms of the vanishing of $\tilde b^{p,q}$ for suitable $p,q$) to assure that $p \PP_M = p \S_M$, while Proposition 5.4(ii) is a particular case of Theorem 6.7(1).
\medskip

{\bf Remark 6.7.2} As in the classical cases (see Remark 6.1.1), $M$ is a $pPL (pS, pWK, pK)$ manifold if and only if the corresponding cone 
 $p \PP_M$ ($p \S_M, p \W_M, p \K_M$) is not empty. But the statement (for instance) \lq\lq $M$ is $pS$ if and only if $M$ is $pWK$\rq\rq does not mean that the cones coincide, but only that, when the cone $p \S_M$ is not empty, then also $p \W_M \neq \emptyset$. This is much more weak than Theorem 6.7.
\medskip

{\bf Remark 6.7.3} As we stated in Theorem 6.7, to get $p\PP_M = p\W_M$ we need $b^{p+1,p} =0$. This condition can be expressed in many ways, using the results of Section 2  (see Remark 4.7.2(3)), f.i. $e^{p,p}=0$, or the injectivity of $i^{p+1,p}_{\ddb,\de} : H^{p+1,p}_{\ddb} \to H^{p+1,p}_{\de}$; moreover, there are also other conditions which assure $p\PP_M = p\W_M$, as the vanishing of 
$h^{p,p}_{\de + \db}$, or $h^{p+1,p}_{\ddb}$, or  $h^{p,p+1}_{\ddb}$
 (see Remark 6.5.1).

\medskip

To end this section, let us consider the case $n=2$.
\begin{prop} Let $M$ be a compact complex surface ($n=2$). The following conditions are equivalent:
\begin{enumerate}
\item $M$ is \KK
\item $h^{2,1}_{\de + \db} = h^{2,1}_{\db}$
\item $\tilde b^{2,1} =0$
\item All  numbers $a^{i,j}, b^{i,j}, c^{i,j}, d^{i,j}, e^{i,j}, f^{i,j}$ vanish.
\end{enumerate}
\end{prop}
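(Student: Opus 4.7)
My plan is to establish the equivalences by going around the cycle $(1) \Rightarrow (4) \Rightarrow (3) \Leftrightarrow (2) \Rightarrow (1)$; only the last implication will require substantive work. For $(1) \Rightarrow (4)$: a K\"ahler manifold is a $\ddb-$manifold, hence regular by Proposition 3.1, so $b^{p,q}=c^{p,q}=d^{p,q}=e^{p,q}=0$ by Definition 2.6; the injection $A^{p,q} \hookrightarrow B^{p,q}$ in (2.1) then gives $a^{p,q}=0$, and the isomorphism $F^{p,q} \simeq A^{n-p,n-q}$ of Proposition 2.2(3) gives $f^{p,q}=0$. For $(4) \Rightarrow (3)$, use Lemma 2.4(1): $\tilde b^{2,1} = b^{2,1} - a^{2,1} = 0$. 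For $(2) \Leftrightarrow (3)$, apply Lemma 2.4(3) at $(p,q)=(2,1)$ and note $c^{2,1} = c^{n,n-1} = 0$ by Lemma 2.3(2).

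For $(3) \Rightarrow (1)$ I would first translate the hypothesis to a more tractable form. By Proposition 2.2(1)-(2) we have $h^{2,1}_\db = h^{0,1}_\db$ and $h^{2,1}_{\de+\db} = h^{0,1}_\ddb$, so condition (2) becomes $h^{0,1}_\ddb = h^{0,1}_\db$. Since $\ddb$ vanishes on $\E^{-1,0}=0$, the space $H^{0,1}_\ddb$ is exactly the space of $d-$closed $(0,1)-$forms, which by complex conjugation is in bijection with the space of $d-$closed holomorphic $1-$forms on $M$.

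I would then prove the auxiliary claim: every holomorphic $1-$form $\beta$ on a compact complex surface is automatically $d-$closed. Since $\db\beta=0$ one has $\de\bar\beta=\overline{\db\beta}=0$, and using $\E^{3,0}=0$ together with $\de\db=-\db\de$ one checks that $\db\bar\beta$ is $d-$closed; then a direct computation yields
\[
d(\beta \wedge \db\bar\beta) = \de\beta \wedge \overline{\de\beta},
\]
whose right-hand side is pointwise a nonnegative multiple of the volume form (since $\de\beta \in \E^{2,0}$ is of top antiholomorphic-degree zero). Stokes' theorem on compact $M$ then forces $\de\beta=0$. Consequently $h^{0,1}_\ddb = \dim H^0(M,\Omega^1) = h^{1,0}_\db$.

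Combining the previous step with $h^{0,1}_\ddb = h^{0,1}_\db$ gives $h^{1,0}_\db = h^{0,1}_\db$. By Kodaira's theorem that the Fr\"ohlicher spectral sequence degenerates at $E_1$ on every compact complex surface, $b_1(M) = h^{1,0}_\db + h^{0,1}_\db = 2 h^{1,0}_\db$ is even, and the classical Buchdahl--Lamari criterion then yields that $M$ is K\"ahler. The main obstacle in the whole proof is this last appeal to the deep Buchdahl--Lamari result; everything else is a routine translation within the cohomological framework of Sections 2 and 6.
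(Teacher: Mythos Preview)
Your cycle $(1)\Rightarrow(4)\Rightarrow(3)\Leftrightarrow(2)\Rightarrow(1)$ and your treatment of the easy implications match the paper's. The difference lies in $(3)\Rightarrow(1)$. The paper stays inside its own cone machinery: by Theorem~6.7(1) with $p=n-1=1$, the condition $\tilde b^{2,1}=0$ says $\S\G_M=\G_M$; since $\G_M\neq\emptyset$ by Gauduchon's theorem, $M$ is $1S$ (hermitian symplectic), and then a cited remark (\cite{A2}, Remark~2.8) gives that $1S$ compact surfaces are K\"ahler. You instead dualise to $h^{0,1}_{\ddb}=h^{0,1}_{\db}$, prove the classical lemma that holomorphic $1$-forms on compact surfaces are closed (your Stokes argument with $d(\beta\wedge\db\bar\beta)=\de\beta\wedge\overline{\de\beta}$ is correct, since for $\gamma\in\E^{2,0}$ one has $\gamma\wedge\bar\gamma\geq 0$), deduce $h^{1,0}_{\db}=h^{0,1}_{\db}$, and finish with $E_1$-degeneration for surfaces plus Buchdahl--Lamari. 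Both routes ultimately rely on a deep surface result; yours is more self-contained in that the only black box is the Buchdahl--Lamari criterion, whereas the paper's route keeps the argument internal to the cone language developed in Section~6 but defers the substance to the cited fact that hermitian symplectic surfaces are K\"ahler (itself close in depth to Buchdahl--Lamari).
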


\begin{proof}

(1) implies (4) because a \KK manifold is a $\ddb-$manifold.

(4) implies (3): obvious.

(3) implies (1): in fact, by Theorem 6.7, $\tilde b^{n,n-1} =0$ is equivalent to $(n-1)\tilde \S_M  = (n-1)\PP_M,$ that is, 
$\S\G_M = \G_M$, so that $\S\G_M$ cannot be empty, i.e. $M$ is 1S. This assures that $M$ is \KK (see f.i. \cite{A2}, Remark 2.8). 

Finally, by Lemma 2.4, (3) is equivalent to (2).
\end{proof}

 \bigskip
 
  \section{Some results}

\medskip

We collect in this last section a miscellanea of results, giving only references for the proofs, and some indications to find examples of manifolds satisfying special forms of $\ddb-$Lemma. The focus is on deformations of complex structures.

\medskip

{\bf a) From local to global.} About this topic, we recall only a couple of results of \cite{AU1} (see also \cite{RWZ1} Section 3 for a comment), i.e.  

\lq\lq On a $(n-1,n)-$th dual mild manifold, any locally conformal balanced structure is also globally conformal balanced\rq\rq (Theorem 3.5), and

 \lq\lq On a $(n-1,n)-$th dual mild manifold, any locally conformal \KK structure is also globally conformal \KK\rq\rq (Proposition 3.7).
 \medskip

{\bf b) Cones and \pkk structures} 

In \cite{A1} we proved that, in the class of $\ddb-$manifolds, every $pPL$ manifold is also $pS$ and $pWK$ for every $p, \ 1 \leq p \leq n-1$; 
using Theorem 6.7, we can be more precise. For every $p, \ 1 \leq p \leq n-1$:
\begin{enumerate}
\item If $M$ is a $\ddb-$manifold, then the following cones coincide:
$$p \W_M = p \S_M = p\tilde \S_M = p \PP_M.$$  
\item $M$ is a $(p+1,p)-$mild $\ddb-$manifold if and only if $p\PP_M = p\tilde \S_M = p \S_M = p\W_M$; thus a $pPL$ manifold, which is a $(p+1,p)-$mild $\ddb-$manifold, is also a $pS$ and a $pWK$ manifold.
\item $M$ is a $(p+1,p)-$weak $\ddb-$manifold if and only if $p\tilde \S_M = p \S_M = p\W_M$; thus a $pS$ manifold, which is a $(p+1,p)-$weak $\ddb-$manifold, is also a $pWK$ manifold.

\end{enumerate}
\medskip

{\bf c) Holomorphic families of compact complex manifolds} 

Recall that a holomorphic family of compact complex manifolds is a proper holomorphic submersion $\pi : \M \to \Delta$ between complex manifolds, where $\Delta$ is assumed to be an open ball of $\C^m$ containing the origin (it suffices to assume $m=1$); all fibres are compact $n-$dimensional manifolds $\pi^{-1}(t) := M_t$, which are diffeomorphic each other (i.e. only the complex structure $J_t$ of $M_t$ varies with $t \in \Delta$; in particular, Betti numbers are invariant).
Following \cite{Po1}, a property (P) is open (closed) under holomorphic deformations if for every holomorphic family and for every $t_0 \in \Delta$:

(open): $M_{t_0}$ has the property (P) implies that $M_t$ has the property (P) for all $t \in \Delta$ close to $t_0$;

(closed): $M_t$ has the property (P) for all $t \in \Delta - t_0$  implies that $M_{t_0}$ has property (P).
\medskip

A lot of work has been done in studying openness or closeness of properties as being K\"ahler, balanced, $\ddb-$manifolds and so on: one can see \cite{Po1}, \cite{Po2}, \cite{PU1}, \cite{X}, \cite{AU2}, \cite{RZ}, \cite{RWZ2}, \cite{B}, \cite{COUV} and some others.

In an old paper (\cite{AB1}), we proved that, while the property of being \KK is open, the property of being \pkk ($p > 1$) is not open. For $p=n-1$, the example is the Iwasawa manifold $I_3$, which is  a $(2,3)-$weak $\ddb-$manifold and a $(2,3)-$dual mild $\ddb-$manifold, but not a $(2,3)-$mild $\ddb-$manifold (by Lemma 2.4 and  \cite{A}, Appendix A) (this example and its deformations will appear more and more).
\medskip

But now we start from  \cite{FY}, where the authors recall that the property of being a $\ddb-$manifold is open, and also the property of being a balanced $\ddb-$manifold is open. By the way, they noticed that the $\ddb-$Lemma is too strong, in fact they prove (see Theorem 6 and Corollary 7):

\lq\lq When $X_t$ is a $(n-1,n)-$weak $\ddb-$manifold for small $t \neq 0$ (f.i., when $h^{2,0}_{\db}(X_t) =0)$, and $X_0$ is balanced, then $X_t$ is balanced for small $t$\rq\rq.

Notice that the request is on every $X_t$; as a matter of fact in \cite{RWZ1} a lot of examples are recalled, and in particular Example 3.13, where $X_0$ and $X_t$ are balanced (they have the real structure of $I_3$),  $X_0$ is a $(2,3)-$mild $\ddb-$manifold, but $X_t$ is not a $(2,3)-$weak $\ddb-$manifold.

In the proof, Fu and Yau use the real diffeomorphism $X_0 \simeq X_t$ to get on $X_t$ a real $d-$closed $(2n-2)-$form $\Omega_t$ corresponding to the balanced $(n-1,n-1)-$form $\Omega_0$ on $X_0$. Its $(n-1,n-1)-$component $\Omega_t^{(n-1,n-1)}$ satisfies 
$\db_t  \Omega_t^{(n-1,n-1)} = - \de_t \Omega_t^{(n-2,n)}$, so that, by the hypothesis, $\db_t  \Omega_t^{(n-1,n-1)} = i \de_t \db_t \Psi_t$; in this manner,
$$\tilde \Omega_t:=   \Omega_t^{(n-1,n-1)}  - i \de_t  \Psi_t - i \db_t \overline \Psi_t$$
is $d-$closed, and it becomes transverse for a small $t$. 
\medskip

When $p < n-1$,  in \cite{RWZ1} (Proposition 4.12 and Remark 4.13) and \cite{RWZ2} (Proposition 1.5) the authors notice that, for a sufficiently small $t$, any smooth real extension of a transverse $(p,p)-$form is still transverse, so that the obstruction relies in the closure (as regards $pS$ manifolds, one can see \cite{B}, Theorem 1.1).

In the first paper, they define the  ${(n,n-1)}-$th mild $\ddb-$Lemma, compare them with the other weak forms of the $\ddb-$Lemma at the level $(n,n-1)$, also with a lot of examples, and then prove the following result (Theorem 3.11): 

\noindent \lq\lq When $X_0$ is a balanced $(n-1,n)-$mild $\ddb-$manifold, then $X_t$ is balanced for small $t$\rq\rq.

On the other hand, $I_3$ shows that the property of being a $(n-1,n)-$mild $\ddb-$manifold is not open, and that the result is not true when mild is replaced by weak.

Moreover in the just cited paper \cite{RWZ1}, on page 7, it is noticed that the property \lq\lq$(n-1,n)-$weak $\ddb-$manifold\rq\rq is not open, where on the contrary  the property \lq\lq$(n-1,n)-$strong $\ddb-$manifold\rq\rq  is open but not closed.

They prove also in Theorem 4.9 (compare \cite{B} Theorem 1.1 for $pPL$ manifolds):

\lq\lq When $X_0$ is a $pK$ $\ddb-$manifold, then $X_t$ is $pK$ for small $t$\rq\rq.
\medskip

In \cite{RWZ2}, the authors get (Theorem 4.1):

\lq\lq If $X_0$ satisfies the  $(p,q+1)-$mild and $(q,p+1)-$mild $\ddb-$lemmata, then there is a $d-$closed $(p,q)-$form $\Omega(t)$ on  $X_t$ depending smoothly on $t$ with $\Omega(0) = \Omega_0$ for any $d-$closed $\Omega_0 \in {\E}^{p,q}(X_0)$\rq\rq.

Hence (Theorem 1.1):

\lq\lq When $X_0$ is a $pK$ $(p,p+1)-$mild $\ddb-$manifold, then $X_t$ is $pK$ for small $t$\rq\rq.

As for this last result, notice the connection with our Theorem 6.7: when $M$ is a $(p,p+1)-$mild $\ddb-$manifold, then it is also a $(p+1,p)-$weak $\ddb-$manifold,
so that a $pS$ structure (a real concept) gives rise to a $pWK$ structure.

\medskip

{\bf d) Deformation invariance of dimensions} 

In \cite{PU1},
the authors give informations about the dimensions of various cohomology groups with low indices,  as 
$h^{0,1}_{\db}, h^{0,1}_{\ddb}, h^{1,1}_{\ddb}$, referring to a comparison between $M$ and $M_t$. Moreover they prove that the property of being a sGG-manifold is open but it is not closed. Other interesting results, about $h^{0,q}_{\db}$ and $h^{p,0}_{\db}$, are given in \cite{ZR}, Theorem 1.9 and Theorem 1.10.

Also in \cite{RZ} the authors compare dimensions of cohomology groups (Theorem 3.1, Theorem 3.6, Theorem 3.7), in particular when $M$ is 
a $(p+1,q)-$mild $\ddb-$manifold and also a $\widetilde {(p,q+1)}-$mild $\ddb-$manifold (see Theorem 1.4 for the cases $p=0$ and $q=0$).

\bigskip

\end{document}